\documentclass[10pt]{amsart}
\usepackage{gensymb}
\usepackage{amscd}
\usepackage[all]{xy}
\usepackage{cleveref}
\usepackage{graphicx}
\usepackage{enumerate}
\def\reals{\mathbb{R}}
\def\complexes{\mathbb{C}}
\theoremstyle{theorem}
\newtheorem{theorem}{Theorem}[section]
\newtheorem{proposition}[theorem]{Proposition}

\theoremstyle{definition}

\makeindex

\begin{document}

\title[On the Lambert conformal conical projection]{On the Lambert conformal conical projection and the general map of the Russian Empire}
\author[Miyachi, Ohshika, Papadopoulos and Yamada]{Hideki Miyachi, Ken'ichi Ohshika, Athanase Papadopoulos and Sumio Yamada}
\address{Hideki Miyachi,  
School of Mathematics and Physics,
College of Science and Engineering,
Kanazawa University,
Kakuma-machi, Kanazawa,
Ishikawa, 920-1192, Japan}
\email{miyachi@se.kanazawa-u.ac.jp}  
\address{Ken'ichi Ohshika,
Department of Mathematics,
Gakushuin University,
Mejiro, Toshima-ku, Tokyo, Japan}
  \email{ohshika@math.gakushuin.ac.jp}
\address{Athanase Papadopoulos,
Institut de Recherche Mathématique Avancée
(Universitéde Strasbourg et CNRS),
7 rue René Descartes,
67084 Strasbourg Cedex France}
  \email{papadop@math.unistra.fr}
  \address{Sumio Yamada,
Department of Mathematics,
Gakushuin University,
Mejiro, Toshima-ku, Tokyo, Japan}
  \email{sumio.yamada@gmail.com}

\date{}
\maketitle

\begin{abstract}
 The problem of drawing geographical maps is the one of mapping a subset of the sphere, representing a country or some other region on the surface of the Earth, into the Euclidean plane, minimising certain distortion properties that are specified in advance. It is known that from the purely mathematical point of view, this is an extremely  difficult problem.
One of Leonhard Euler's duties during his first stay at the Imperial Academy of Sciences of Saint Petersburg (1727-1741) was to help establishing maps of the Russian Empire. He worked on this project under the direction of the famous French geographer Joseph-Nicolas Delisle, who was the head of the astronomy and geography departments of the Academy. 
The general map of the Russian Empire, together with several maps of its particular regions were published under Euler's direction  in the so-called Russian Atlas in 1745.
In his later memoir \emph{De proiectione geographica De Lisliana in mappa
generali imperii russici usitata}, written in 1777, Euler developed the mathematical  theory of the method used by Delisle on a heuristic basis, which he  himself used for drawing the general map of the Russian Empire. This method usually carries now the name Delisle--Euler map.
In a previous paper, the first two authors of the present paper compared the Delisle--Euler map with several other maps of the conical type, with respect to various mathematical distorsion properties. They showed that this map is the best one from all the points of view considered, when it is applied to the drawing of the Russian Empire. In the present paper, we compare the Euler--Delisle map with a map which was not considered in the paper mentioned, namely, the so-called Lambert conformal conical projection, applied to the same region of the Earth. We show that the latter is better in several respects than all the other maps considered in the previous paper, including the Delisle--Euler map.

 \bigskip

\noindent Keywords: Mathematical geography, map drawing,  conformal mapping, spherical geometry, Johann Heinrich Lambert, Lambert geographical map, Marinos of Tyre, Ptolemy, Marinos--Ptolemy map, Joseph-Nicolas Delisle, Leonhard Euler, dilatation, quasiconformal mapping, eighteenth century, the general map of the Russian Empire.

\bigskip

\noindent AMS codes: 01A20, 01A50, 91D20, 30C62, 53A05, 30C20

 \end{abstract}

\vfill\eject

\section{Introduction} 

The mathematical problem of drawing geographical maps is the problem of finding a mapping from a subset of the sphere, representing a country or some other
region on the surface of the Earth, into the Euclidean plane which satisfies certain
properties that are specified in advance. Examples of properties which are commonly
used in cartography are:

\begin{enumerate}
\item  conformality, that is, preservation of angles;

\item area preservation;

\item sending the parallels on the sphere to parallel lines in the plane;
 
\item sending the parallels to arcs of concentric circles;
 
\item sending the meridians of the sphere to parallel lines in the plane;
 
  \item sending the meridians to lines meeting at a common point;

  \item sending the meridians of the sphere to (pieces of) concentric circles or ellipses; and

\item \label{P6} having least distance distortion in a sense that has to be made precise (involving the bi-Lipschitz constant, the supremum of the local bi-Lipschitz constant, the quasi-conformal dilatation, etc.).
  \end{enumerate} 

Many other properties are also used in cartography. In general, a choice of more than one of the above properties is made for drawing a specific map. The properties desired depend on the use which  one wants to make of the map, and also on the shape of the region of the earth which has to be represented.
For instance, in one of the maps known usually\index{geographical map!Ptolemy} under the name\index{Ptolemy map} Ptolemy\index{Marinus--Ptolemy map} or Marinus--Ptolemy map,\footnote{The map is called after the geographer and astronomer Marinus  of Tyre\index{Marinus of Tyre} (c. 70-130 AD) and the mathematician, geographer and astronomer Claudius Ptolemy\index{Ptolemy (Claudius Ptolemy)} (c. 100-168 AD), also known as Ptolemy of  Alexandria.} as well as in the Delisle--Euler map\footnote{The map is called after the French astronomer and geographer Joseph-Nicolas Delisle\index{Delisle, Joseph-Nicolas} (1688-1768) and Leonhard Euler\index{Euler, Leonhard} (1707-1783), who worked at the same time at the Imperial Academy of Sciences of Saint Petersburg. Delisle was invited by the Tsar Peter the Great to realise his project of establishing maps of the newly founded Russian Empire: a general map, and maps of smaller regions. In 1736, Delisle started a huge programme of triangulating the Empire, a crucial step for the computation of  distances and coordinates, that is, the data used in creating the maps. We also note that Euler, besides being the outstanding mathematician as we know, was also, among others, a geographer and astronomer. During several years, he assisted Delisle, who was the head of the geography and astronomy departments of the Academy. Later on, after a conflict between Delisle and the Academy's administration, Euler became himself the head of that department.} which we shall mention thoroughly in this paper, the meridians are represented by straight lines in the plane which meet at a common point and the parallels are represented by concentric circles which are centred at that point.  Furthermore, there is a condition on the distance distortion being small on two chosen parallels. (In the case of Ptolemy, remarkable parallels, for the drawing of a map of the known world,  are  the parallel passing through the island of Thule, which was the most northerly  known location, 
the equator, which was the southern extreme, and the parallel passing through the island
of Rhodes, along which a large number of longitude measures have been conducted in Greek antiquity, see \cite[Chapter XXI]{Ptolemy-geo}.)

  Regarding Property (\ref{P6}) about distortion, we note the following, which shows that there is no map from a subset of the sphere to the Euclidean plane not distorting distances up to scale:
  
  \begin{proposition}\label{prop:positive}
There is no mapping for  a subset with non-empty interior of the sphere which preserves distances up to scale.
  \end{proposition}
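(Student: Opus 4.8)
The plan is to derive a contradiction from the mere existence of such a map by exploiting the fact that the sphere and the plane have different Gaussian curvatures, a difference which already shows up at the level of the distance data, so that no regularity of the map need be assumed. Suppose then that $f$ is defined on a subset $A$ of the sphere of radius $R$ with non-empty interior, and that there is a constant $\lambda>0$ with $d_{\mathrm{pl}}(f(x),f(y))=\lambda\,d_{\mathrm{sph}}(x,y)$ for all $x,y\in A$, where $d_{\mathrm{sph}}$ denotes the intrinsic (great-circle) distance and $d_{\mathrm{pl}}$ the Euclidean distance of the target plane. Shrinking $A$, I may assume it is a small geodesically convex open disk, so that any two of its points are joined by a unique minimising geodesic contained in $A$. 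The first elementary step is to observe that $f$ carries geodesic arcs to straight segments: if $m$ lies on the minimising arc from $p$ to $q$, then $d_{\mathrm{sph}}(p,m)+d_{\mathrm{sph}}(m,q)=d_{\mathrm{sph}}(p,q)$, and applying $f$ together with the scaling relation gives $d_{\mathrm{pl}}(f(p),f(m))+d_{\mathrm{pl}}(f(m),f(q))=d_{\mathrm{pl}}(f(p),f(q))$, so that $f(m)$ lies on the Euclidean segment joining $f(p)$ and $f(q)$. In particular $f$ sends the midpoint of a spherical arc to the midpoint of the corresponding segment.

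Next I would confront this with a concrete configuration. Choose a non-degenerate spherical triangle $PQR$ inside $A$, let $M$ be the midpoint of the side $QR$, and set $a=d_{\mathrm{sph}}(Q,R)$, $b=d_{\mathrm{sph}}(P,R)$, $c=d_{\mathrm{sph}}(P,Q)$. On the one hand, the four image points $f(P),f(Q),f(R),f(M)$ form a planar configuration in which $f(M)$ is the midpoint of the segment from $f(Q)$ to $f(R)$ and the three sides have lengths $\lambda a,\lambda b,\lambda c$; since a Euclidean triangle is determined up to congruence by its three sides, the median length $d_{\mathrm{pl}}(f(P),f(M))$ is completely determined by $a,b,c$ through the parallelogram law, namely $d_{\mathrm{pl}}(f(P),f(M))=\tfrac{\lambda}{2}\sqrt{2b^{2}+2c^{2}-a^{2}}$. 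On the other hand, the scaling relation forces $d_{\mathrm{pl}}(f(P),f(M))=\lambda\,d_{\mathrm{sph}}(P,M)$, where $d_{\mathrm{sph}}(P,M)$ is the length of the spherical median, governed by the spherical law of cosines. Equating the two expressions would force the spherical and Euclidean median-length formulas to agree for every small triangle in $A$.

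The main point, and the step that I expect to carry the genuine content, is to show that this forced identity cannot hold, which is precisely the assertion that the sphere is not locally flat. I would establish it either by a direct comparison of the two median formulas, a short computation in spherical trigonometry showing that they differ by a strictly positive quantity governed by the curvature $1/R^{2}$ and hence nonvanishing for small non-degenerate triangles; or, more conceptually, by invoking Gauss's \emph{Theorema Egregium}. Indeed, after rescaling the target metric by $1/\lambda$, the map $f$ would be a distance-preserving bijection of an open piece of the sphere onto an open subset of the plane, hence an isometry of the corresponding intrinsic metrics, which must therefore transport the Gaussian curvature $1/R^{2}>0$ of the sphere to the curvature $0$ of the plane, an absurdity. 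Either route yields the required contradiction and shows that no such $f$ exists. The only subtlety to keep an eye on is the passage from the purely metric hypothesis (preservation of distances) to the curvature obstruction; the median argument above is designed precisely to avoid any appeal to differentiability of $f$, deducing the incompatibility directly from finitely many distance relations.
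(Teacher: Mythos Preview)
Your plan is correct and is very close in spirit to what the paper does: both arguments first observe that a distance-scaling map sends midpoints to midpoints, and then derive a contradiction from a single metric relation among finitely many points of a spherical triangle that holds in the plane but fails on the sphere. The difference is only in the choice of configuration. You compare the \emph{median} length $d(P,M)$ with the Euclidean parallelogram-law value $\tfrac12\sqrt{2b^2+2c^2-a^2}$, which requires a small spherical-trigonometric computation (or a Taylor expansion) to see that the two disagree. The paper instead invokes Menelaus' Proposition~27, the \emph{midsegment} inequality: if $D,E$ are the midpoints of $AB,BC$ then $DE>AC/2$ on the sphere, whereas the midpoint-preservation forces $|f(D)f(E)|=|f(A)f(C)|/2$ in the plane, an immediate contradiction with no further calculation. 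Your route is self-contained and makes the curvature origin of the obstruction transparent (and your remark about avoiding differentiability via the finite-point argument is well taken, since the Theorema Egregium alternative would need something like Myers--Steenrod); the paper's route is shorter and ties the result to its classical source in the \emph{Spherics}.
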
 
   This proposition was known since Greek Antiquity; it follows easily from propositions contained in the \emph{Spherics} of Menelaus of Alexandria, see \cite{RP2} and the report in \cite{Menelaus-Monthly}. For instance, it is an immediate consequence of Proposition 27 of the \emph{Spherics} which says that if $ABC$ is an arbitrary geodesic triangle on the sphere and if $D$ and $E$ are the midpoints of $AB$ and $BC$ respectively, then we have the following inequality between distances: $DE>AC/2$  (the equality is known today as a comparison inequality in positively curved metric spaces).  Proposition \ref{prop:positive} also follows (although in a slightly less obvious way)  from Proposition 12 of the \emph{Spherics} which says that in any spherical triangle the angle sum is strictly greater than two right angles.
 
In the paper \cite{MO}, the first two authors of the present paper compared a certain number of geographical maps\index{conical geographic projection} used for\index{geographical map!conical} the representation of the 18th century Russian Empire.  The reason for choosing  the Russian Empire is that Leonhard Euler, in his memoir
\cite{Euler-Delisle}, developed a theory for drawing geographical maps and he used it in the practical problem of drawing the general geographical maps of the Empire. We recall here that the drawing a general map of the Russian Empire and maps of  its specific regions were part of Euler's duties during his first stay at the  Imperial Academy of Sciences of Saint Petersburg (1727-1741). For the mathematical side of this undertaking, the interested reader may refer to Euler's memoirs on geography, written in 1777 and published the following year, \cite{Euler-De-represenatione, Euler2, Euler-Delisle}, all translated into English in the book \cite{CP}, especially the last one, which is concerned with the so-called Delisle--Euler map. The title of the last memoir is \emph{De proiectione geographica De Lisliana in mappa
generali imperii russici usitata}, whose translation is: ``On Delisle’s geographical projection used for a
general map of the Russian Empire". We also refer to the so-called \emph{Russian Atlas} published in Saint Petersburg in 1745 under Euler's direction \cite{atlas-Russicus}. In the introduction to that atlas, the Delisle--Euler method is outlined in a non-technical way. For a modern exposition of Euler's work on the Delisle map, see \cite{Geograp3, Geograpy4}. For a survey on Euler's work on geography and his relation with Delisle, the reader can refer to \cite{Geography-GB}.
The Delisle\index{Delise--Euler map} geographical map to which Euler referred in his memoir \cite{Euler-Delisle} is also called the Delisle--Euler map, a terminology we shall use here.

In the paper \cite{MO}, the authors  showed that the Delisle--Euler \index{Delise--Euler map} geographical map is the best, from various points of view which they made precise, among a set of geographical maps which they termed ``conical", used for the Russian Empire. The adjective ``conical" stems from the fact that these maps are first drawn on a cone whose apex is placed on the North-South axis of the sphere, and then, the cone is cut along one meridian and developed on the plane so that the final result is a map  drawn on the plane.\footnote{The reader should be aware of the fact that, unlike in mathematics,  there is no universally adopted nomenclature in geography, and in particular for what concerns geographical maps. The same maps have different names in different geographical tables, some maps are very poorly described, etc. The well-known geographer and geodesist  L. P. Lee, in the conclusion to his report on \emph{The nomenclature and classification of map projection} \cite{Lee}, writes: 
``[. . . ]
The foregoing observations reveal that the terminology of map projections is in a state of
confusion that would not be tolerated in any other modern science, and a systematization
is long overdue." Sometimes, historians of mathematics add to the confusion. For instance, we read at the beginning of the paper \cite{Abgrall} that the stereographic projection is a conical projection.} Among the conical projections, the Delisle--Euler map is of the type ``equidistant conical projection", in which the images of equidistant parallels are equidistant curves. 
The Delisle--Euler, unlike the Lambert conformal conical projection,\index{Lambert conformal conical projection} is\index{geographical map!Lambert conformal conical}  not conformal. The Delisle--Euler map is represented in Figure \ref{fig:Empire}.

\begin{figure}
\centering
\includegraphics[width=1\linewidth]{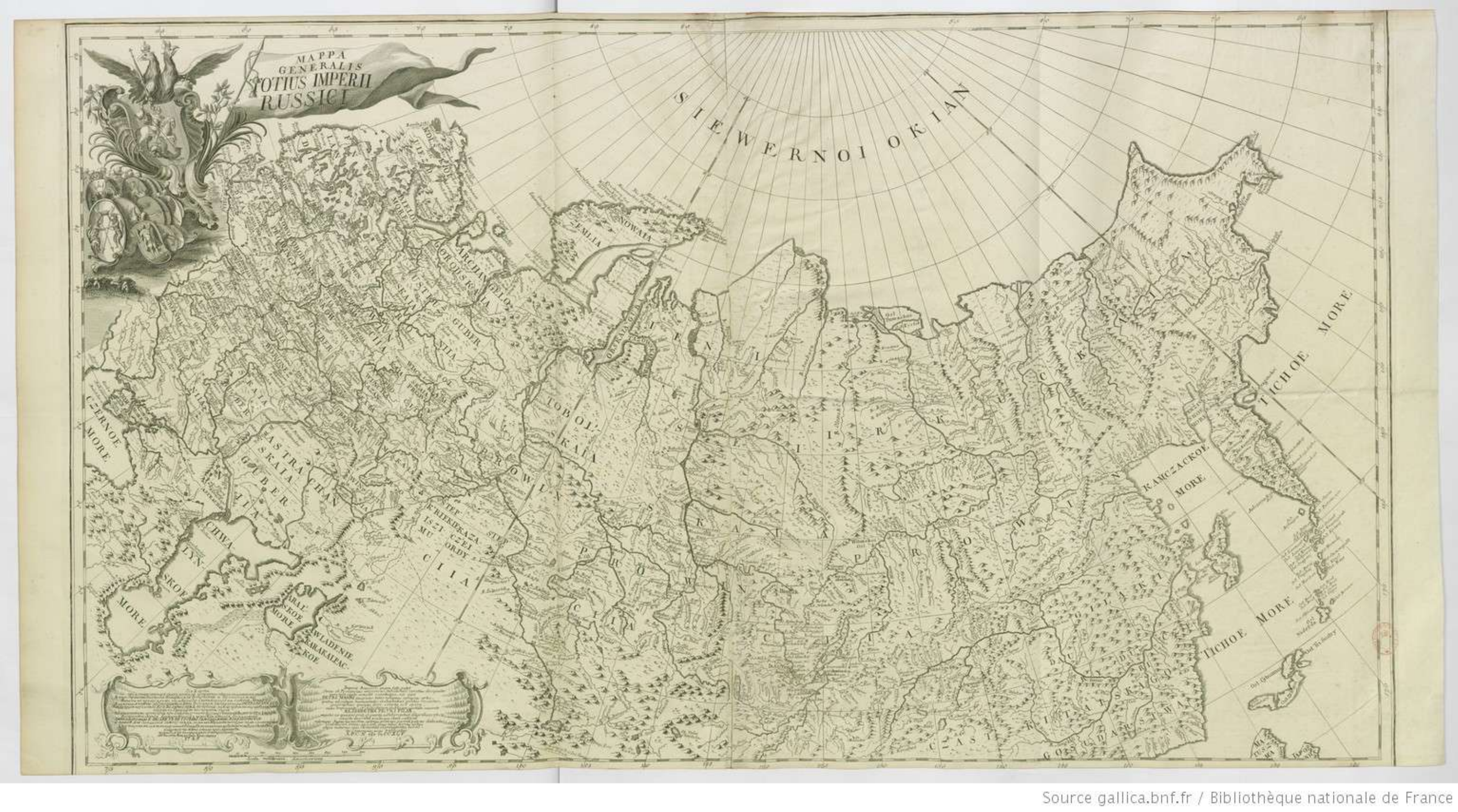}
\caption{An example of a projection map: Euler's map of the Russian Empire, from the \emph{Atlas Russicus} \cite{atlas-Russicus}, published in Saint Petersburg in 1745. Source: Gallica, Bibliothèque nationale de France.}
  \label{fig:Empire}
\end{figure}

Another illustration of a conical map projection is given in Figure \ref{fig:Conical-Ruysch}. This is a rendition of a map by the Dutch cartographer, astronomer and painter Johann Ruysch (c.\ 1460-1533) drawn in 1507. The map is titled \emph{Uniuersalior cogniti orbis tabula ex recentibus confecta obseruationibus} (A More Universal Map of the Known World Produced     from Recent Observations).
 Ruysch's map was included as a supplementary modern map to  the edition of Ptolemy's \emph{Geographia}  published in Rome in 1507. It is the second oldest known printed representation of the New World and it constitutes a significant  transition between the fifteen-century representations based on Ptolemy's views, and the new representations  made  after the discovery of the American continent. The representation of the West Indies on this map also takes into account the voyages of Christophorus Columbus.  This conical projection satisfies the further condition that at the equator, the length of a degree of latitude is equal to that of a degree of longitude.

\begin{figure}
\centering
\includegraphics[width=1\linewidth]{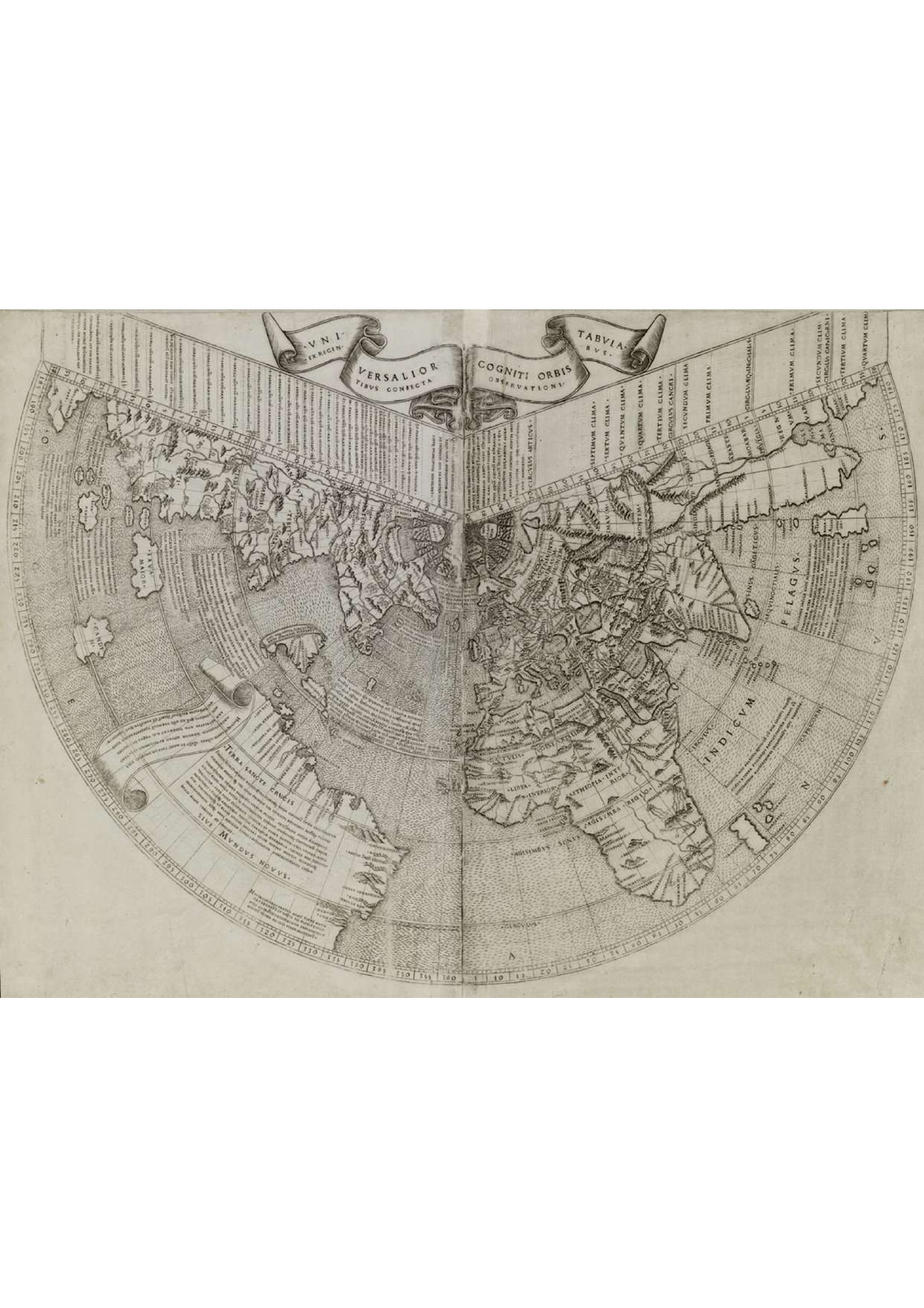}
\caption{Another example of a conical projection: A rendition of the map  \emph{Universalior Cogniti Orbis Tabula ex Recentibus Confecta Observationibus} by Johann Ruysch. Publisher  Bernardinus Venetus de Vitalibus. From the William C. Wonders Map Collection,  the University of Alberta Library.}
  \label{fig:Conical-Ruysch}
\end{figure}

The maps considered in the papers  \cite{MO} and  \cite{Geography-GB-OMP} carry the names
conical horizontal projection, conical orthogonal projections,\index{geographical map!Delisle--Euler} Delisle--Euler projection,\index{geographical map!Delisle--Euler}  Teichm\"uller projection and orthogonal projection. We also note that 
the\index{geographical map!of the Russian Empire (Delisle--Euler)} Delisle--Euler\index{General map of the Russian Empire (Delisle--Euler)}  method\index{geographical map!Delisle--Euler} turns out to be very close to the so-called Marinus--Ptolemy\index{Marinus--Ptolemy map}  map\index{geographical map!Marinus--Ptolemy} described by Ptolemy in the first-second centuries AD, see the paper \cite{Geography-GB-OMP}.

 The so-called Lambert conformal conical projection is a well-known method for constructing geographical maps, and it was often used in cartography until the 20th century; see \cite{Dav, Germain, Tissot}. One of the properties which distinguishes it from the other maps studied in the papers  \cite{MO} and  \cite{Geography-GB-OMP} is that it is conformal.  It was used in France for military purposes during World Wars I and II, see e.g. \cite{Dhollander, Stanley}.  This projection was not taken into account in the paper \cite{MO} where several other geographical maps were compared. 
 In the present paper, we compare the Lambert projection with the maps considered in  \cite{MO}, and we show that it is better than them in several respects.

\section{The Lambert conical projection}
A Lambert projection is a conformal map  from a spherical annulus to an annulus centred on the apex of a circular cone.
In this projection, meridians are represented by lines intersecting in a common point and parallels are represented by arcs of circles centred at this same point.
Let us describe it  more mathematically introducing some notation.
Consider a unit sphere $\mathbb{S}^2$ in the Euclidean space $\mathbb{R}^3$  with the north pole at $(0,0,1)$ and the south pole at $(0,0,-1)$,
and an annulus $A$ on $\mathbb{S}^2$ bounded by two parallels.
Consider a circular cone $C(\alpha)$ with cone point on the axis joining the two poles and situated above the north pole, having apex angle $2 \alpha$ and intersecting $\mathbb{S}^2$ along two circles centred at the north pole (see Figure \ref{fig:cone}.) These circles are called parallels.
Let  $L_0$ be the one of these parallels.
We consider a conformal map from the spherical annulus onto a conical annulus on $C(\alpha)$ bounded by two concentric circles.
Below, we shall explain how we normalise this map.
This map is evidently equivariant with respect to the rotation around the $z$-axis.
This is what we call a Lambert conical projection\index{Lambert conformal conical projection} with\index{geographical map!Lambert conformal conical} angle $\alpha$ (see Figure \ref{fig:cone1}).

Consider the angle parameter $\alpha$, measuring the angle between the vertical axis and a generator of the cone.
As $\alpha$ tends to $0$ while keeping one of the two parallels, say $L_0$,  intersecting the sphere, the Lambert projection with angle $\alpha$ converges to a so-called Mercator projection to the cylinder intersecting the sphere along $L_0$.\footnote{The Mercator projection or Mercator nautical map  is a conformal map of the type ``cylindrical projection", and sometimes the cylinder is assumed to be tangent to the equator. One must be aware of the fact that the geographical maps have sometimes different names in different treatises or catalogues on maps. This is why when we deal with a projection, from the mathematical point of view, we must define it precisely. We also note that the term projection is used here in a broader sense than it is used in the classical projective geometry.}
In the other direction, as $\alpha$ tends to $\pi/2$, keeping again the same parallel $L_0$, intersecting the sphere, the Lambert projection with angle $\alpha$ converges to the stereographic projection into the plane intersecting $\mathbb{S}^2$ along $L_0$.
 The existence of this family of Lambert projections connecting the Mercator\index{geographical map!Mercator} projection\index{Mercator projection (Mercator nautical map)}, with the stereographic projection\index{stereographic projection} was already observed by Lambert himself, see \S47 in \cite{LamG}. We reproduce here an excerpt, in the translation by Annette A'Campo in \cite{CP}, where Lambert considers this one-parameter family of maps:
 \begin{quote}\small
 
 The stereographic projection of the sphere as well as Mercator’s nautical map 
have the special property that all angles keep the values they have on the sphere. This
yields the greatest possible similarity that a plane figure can have with a figure drawn
on a sphere. But the question remained whether this property only holds for these
two kinds of projections or whether even though they seem to be very different,
they might be joined by intermediate forms of projections. Mercator represents
the meridians by parallel lines that intersect the equator orthogonally and that are
divided by the logarithms of the cotangents of half the equator height. The equator
 itself is divided into 360 equal parts as degrees. Hence in this projection the angle
of intersection of the meridians equals zero, since they are parallel. However, for
the stereographic projection from the pole, the meridians that are again represented
as lines, intersect under their true angles. Thus if between these
two kinds of projections there were intermediate forms, they should be looked for
by increasing or decreasing the angle of intersection of the meridians in arbitrary
proportion to their angle of intersection on the sphere. This is the way in which I
will proceed here.
 \end{quote}

It is conceivable that Lambert was the first in history to propose a one-parameter family of geographical map joining two significant geographical maps.

We shall compute distortion parameters of this family of projections (we shall say more precisely what distortions means). We first make a few remarks.
 
A Lambert projection, as we defined it, can be extended by analytic continuation to a map whose domain is the twice-punctured sphere $\mathbb{S}^2 \setminus \{(0,0,-1), (0,0,1)\}$.
Therefore, to normalise the projection, we shall instead specify a conformal map from $\mathbb{S}^2 \setminus \{(0,0,-1), (0,0,1)\}$ to the complement of the apex $P$ in $C(\alpha)$.
We choose the circular cone $C(\alpha)$ so that $P$ lies on the $z$-axis above the point $(0,0,1)$ and we call $L_0$ the lower parallel among the two parallels which are the intersection of  $\mathbb{S}^2$ with the cone. Furthermore, we call $\rho_0$ the height (that is, the $z$-coordinate) of the parallel $L_0$. We introduce a conformal map from $\mathbb{S}^2 \setminus \{(0,0,-1), (0,0,1)\}$ onto  $C(\alpha) \setminus \{P\}$ which is the identity on $L_0$.
We denote the $z$-coordinate of $L_0$ by $\rho_0$. The conical annulus $C(\alpha)$ in $\reals^3$ is also denoted by by $C(\alpha, \rho_0)$.
Note that the conformal map is uniquely determined, and can be expressed as a composition of the stereographic projection $\mathbb{S}^2 \setminus \{(0,0,-1), (0,0,1)\} \rightarrow \complexes \setminus \{0\}$, and a conformal map from $\complexes \setminus \{0\}$ onto a sector isometric to $C(\alpha)$.

\begin{figure}
\includegraphics[bb = -1 0 406 304, height = 5cm]{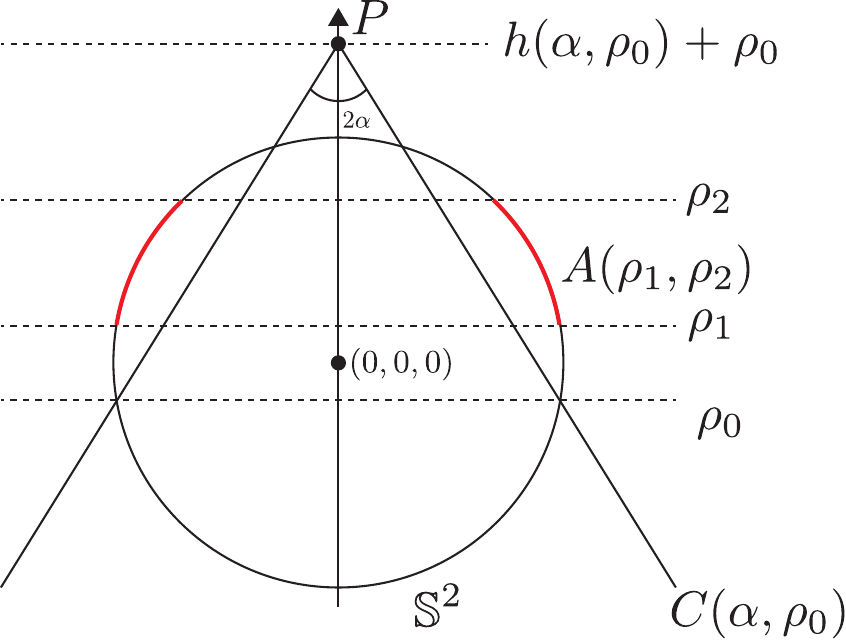}
\caption{The locations of the cone $C(\alpha,\rho_0)$ and the annulus $A(\rho_1,\rho_2)$. The cone $C(\alpha,\rho_0)$ has  apex angle $2\alpha$ and the lower circle in the intersection $\mathbb{S}^2\cap C(\alpha,\rho_0)$ is the parallel at $z=\rho_0$.}
\label{fig:cone}
\end{figure}
\begin{figure}

\begin{minipage}[b]{0.49\columnwidth}

\includegraphics[bb = 0 0 260 224, height = 5cm]{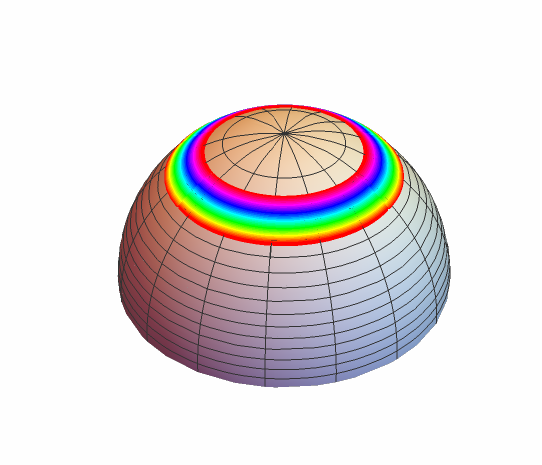}
\end{minipage}
\begin{minipage}[b]{0.49\columnwidth}
\includegraphics[bb = 0 0 260 204, height = 5cm]{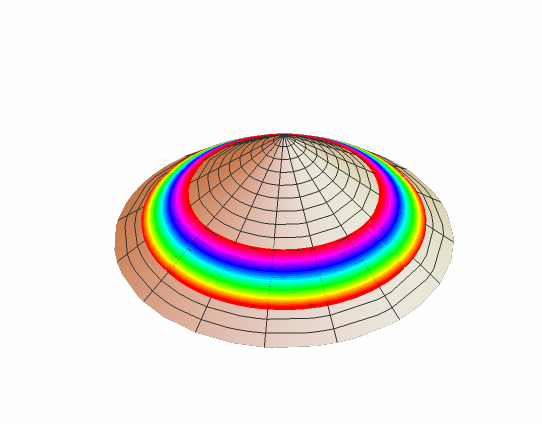}

\end{minipage}
\caption{The domain and the range of a Lambert conical projection}
\label{fig:cone1}
\end{figure}

\begin{figure}
\centering
\includegraphics[width=1\linewidth]{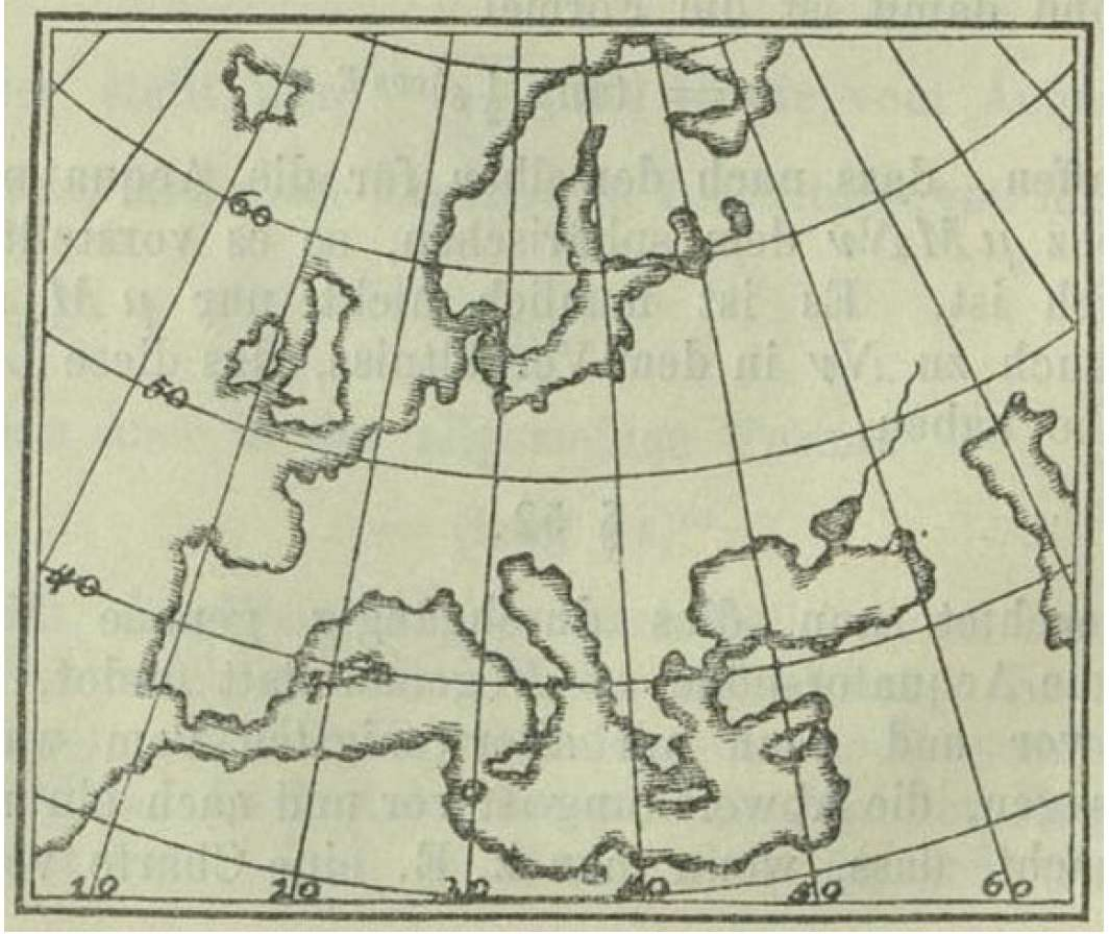}
\caption{A Lambert conical map of Europe, extracted from his memoir \emph{Beytr{\"a}ge zum Gebrauche der Mathematik und deren Anwendung} \cite{LamG}.}
  \label{fig:Lambert-map}
\end{figure}

\section{Stereographic projection and the modulus of a concentric spherical annulus}
We now start some calculations which will provide concrete mathematical expressions of the Lambert projection.
We shall first give a formula for the modulus of a spherical concentric annulus by considering its image under the polar stereographic projection.

Let us first set some notation.

We use the notation $(X,Y,Z)$ for the ordinary Cartesian coordinate system of $\mathbb{R}^3$.
The Euclidean metric on $\reals^3$ is defined by 
$
ds^2_{\mathbb{R}^3}=dX^2+dY^2+dZ^2.
$
The unit sphere in $\reals^3$  is expressed as $\mathbb{S}^2=\{(X,Y,Z)\in \mathbb{R}^3\mid X^2+Y^2+Z^2=1\}$.
We parametrise  as follows the unit sphere using two  parameters $\theta$ and $\rho$ corresponding to the longitude and the $Z$-coordinate:
$$
[0,2\pi]\times [-1,1]\ni (\theta,\rho)\mapsto (\sqrt{1-\rho^2}\cos\theta,\sqrt{1-\rho^2}\sin\theta,\rho)\in \mathbb{S}^2.
$$
 Under this parametrisation, the polar stereographic projection $\Pi$ with centre at $(0,0,1)$, which we call, for simplicity, stereographic projection,  is a conformal map from $\mathbb{S}^2\setminus \{(0,0,1)\}$ onto the $XY$-plane $\mathbb{C}=\mathbb{R}^2$ defined by
\begin{equation}
\label{eq:sphere_parameter}
\Pi(\theta,\rho)=w=u+iv=\sqrt{\frac{1+\rho}{1-\rho}}e^{i\theta}.
\end{equation}
 
Then $\Pi$ takes the south pole $(0,0,-1)\in \mathbb{S}^2$ to  the origin of the $XY$-plane.
The spherical metric on  $\mathbb{S}^2$ induced from $\reals^3$  is expressed in the $w$-plane, which is the range of $\Pi$, as
\[
(\Pi^{-1})^*ds^2_{\mathbb{R}^3}=\dfrac{4}{(1+|w|^2)^2}|dw|^2.
\]


Given two real numbers $\rho_1, \rho_2$ satisfying  $-1<\rho_1<\rho_2<1$,
the associated spherical concentric annulus is defined by 
$$A(\rho_1,\rho_2)=\{(\theta,\rho)\in \mathbb{S}^2\mid \rho_1<\rho<\rho_2\}.$$
The stereographic projection $\Pi$ sends  $A(\rho_1, \rho_2)$ to the planar concentric annulus
\[
\left\{
\sqrt{\frac{1+\rho_1}{1-\rho_1}}<|w|<\sqrt{\frac{1+\rho_2}{1-\rho_2}}
\right\}.
\]
Therefore, the modulus of the image of the annulus is given by the following formula.
\[
{\rm Mod}(A(\rho_1,\rho_2))=\frac{1}{2\pi}\log \sqrt{\frac{1-\rho_1}{1+\rho_1}}\sqrt{\frac{1+\rho_2}{1-\rho_2}}
=\frac{1}{4\pi}\log \frac{1-\rho_1}{1+\rho_1}\frac{1+\rho_2}{1-\rho_2}.
\]
This result was already known to Herbert Gr\"otzsch (1928). The reader is referred to  Gr\"otzsch's original article \cite{Grotzsch1}; see also its  translation in \cite{Grotzsch2}.

\section{Conformal maps from the complement of the poles in the sphere to a circular cone}
\subsection{The flat structure on a circular cone}
We first denote by $C(\alpha, A)$ the cone which is rotationally symmetric with respect to the $Z$-axis, oriented downwards, whose apex is located at $(0, 0, A)$, and whose apex angle is $\alpha$. 

For  $\alpha$ and $\rho_0$ satisfying  $0<\alpha<\pi/2$ and $-1<\rho_0<1$,
we impose the following condition:
\begin{equation}
(h(\alpha,\rho_0)+\rho_0)\sin\alpha<1, 
\label{eq:condition-rho0-1}\\
\end{equation}
where the function $h$ is defined by
\[
h(\alpha,\rho)=\frac{\sqrt{1-\rho^2}}{\tan\alpha}.
\]
This condition \eqref{eq:condition-rho0-1} says that the circular cone $C(\alpha,\rho_0 +  h(\alpha,\rho_0) )$ intersects the unit sphere $\mathbb{S}^2$ along two circles. 
Here, $\rho_0$ is the $Z$-coordinate of the lower small circle where $C(\alpha,\rho_0 + h(\alpha,\rho_0))$ and $\mathbb{S}^2$ intersect.

The cone $C(\alpha,\rho + h(\alpha,\rho_0))$ in $\mathbb{R}^3$ is identified as
\[
C(\alpha,\rho_0 + h(\alpha,\rho_0))=
\left\{(X,Y,Z)\in \mathbb{R}^3\mid Z=\rho+h(\alpha,\rho_0)-\frac{\sqrt{X^2+Y^2}}{\tan\alpha}.
\right\},
\]
\begin{equation}
\rho_0=\sin^2\alpha(h(\alpha,\rho_0)+\rho_0)
-\cos \alpha
\sqrt{1-(h(\alpha,\rho_0)+\rho_0)^2\sin^2\alpha}.
\label{eq:condition-rho0-2}
\end{equation}
and Condition \eqref{eq:condition-rho0-2} implies that that $Z$-coordinate of the lower intersection with $\mathbb{S}^2$ is equal to $\rho_0$.

Let $S(\alpha)=\{\zeta\in \mathbb{C}\mid 0<\arg \zeta \leq 2\pi\sin\alpha\}$ be the sector in the $\zeta$-plane $\mathbb{C}=\mathbb{R}^2$ corresponding to the developed image of $C(\alpha,\rho_0 + h(\alpha + \rho_0))$ in that plane.

 Using the polar coordinates $\zeta=re^{i\theta}$ on $S(\alpha)\subset \mathbb{C}$, we define the map $\Phi_0 \colon S(\alpha)\to C(\alpha,\rho_0 + h(\alpha + \rho_0))$ by
\begin{align*}
\Phi_0(r,\theta)
&=
(0,0,\rho_0+h(\alpha,\rho_0)) \\
&\quad+\left(
r\sin \alpha \cos\left(\frac{\theta}{\sin\alpha}\right),
r\sin \alpha \sin\left(\frac{\theta}{\sin\alpha}\right),
-r\cos\alpha
\right).
\end{align*}

The map $\Phi_0$ is an isometry with respect to the Euclidean metric on $S(\alpha)$ and the first fundamental form of $C_\alpha$. Indeed, we have
\begin{align*}
&(\Phi_0)^* ds^2_{\mathbb{R}^3}=
\left(
\sin \alpha\cos\left(\frac{\theta}{\sin\alpha}\right)dr
-r\sin \alpha \sin\left(\frac{\theta}{\sin\alpha}\right)d\theta
\right)^2 \\
&\quad+
\left(
\sin \alpha\sin\left(\frac{\theta}{\sin\alpha}\right)dr+r\sin \alpha \cos\left(\frac{\theta}{\sin\alpha}\right)d\theta
\right)^2+
\cos^2\alpha dr^2 \\
&=dr^2+r^2d\theta^2,
\end{align*}
where $\zeta=re^{i\theta} \in S(\alpha)$.

We now compose the following two maps, and construct a conformal map $\Phi_1$ from $\mathbb{C}\setminus \{0\}$ onto $C(\alpha, \rho_0 + h(\alpha, \rho))\setminus \{(0,0,\rho_0+h(\alpha,\rho_0))\}$:
$$
z \mapsto z^{\sin(\alpha)}=\zeta=re^{i\theta} \mapsto\Phi_0(r,\theta),
$$
where the domains and ranges are
$$
\mathbb{C}\setminus \{0\} \longrightarrow S(\alpha)\setminus \{0\} \longrightarrow C(\alpha,\rho_0+ h(\alpha, \rho_0))\setminus \{(0,0,\rho_0+h(\alpha, \rho_0))\}. 
$$

Using the polar coordinate $z=Re^{i\Theta}$ on the $z$-plane, the composition $\Phi_1\colon \mathbb{C}\setminus \{0\}\to C(\alpha,\rho_0+ h(\alpha, \rho_0))\setminus (0,0,\rho_0+h(\alpha,\rho_0))$ is expressed as
\begin{equation}
\label{eq:cone_parameter}
\Phi_1(R,\Theta)=
\left(
R^{\sin\alpha}\sin \alpha \cos\left(\Theta\right),
R^{\sin\alpha}\sin \alpha \sin\left(\Theta\right),
\rho_0+h(\alpha,\rho_0)-R^{\sin\alpha}\cos\alpha
\right)
\end{equation}
and the pull-back metric $\Phi_1^*ds^2_{\mathbb{R}^3}$ of the flat metric on $C(\alpha)\setminus (0,0,\rho_0+h(\alpha,\rho_0))$ becomes
\begin{equation}
\label{eq:Euclid_metric_Cone}
\Phi_1^*ds^2_{\mathbb{R}^3}=( R^{\sin\alpha-1}\sin \alpha)^2(dR^2+R^2d\Theta^2).
\end{equation}

\subsection{The normalised conformal representation of the sphere}
We set
\[
R_{\rho_0}=\left(\frac{\sqrt{1-\rho_0^2}}{\sin\alpha}\right)^{1/\sin\alpha}.
\]
Then, the circle $\{z\in \mathbb{C}\mid |z|=R_{\rho_0}\}$ in $\mathbb{C}$ is mapped by $\Phi_1$ to the lower component of the two small circles $C(\alpha,\rho_0+ h(\alpha, \rho_0)) \cap \mathbb{S}^2$, which we denote by  $\gamma(\rho_0)=\{Z=\rho_0\}\subset \mathbb{S}^2$ .
We define a map $\Phi_3: \mathbb{C}\setminus \{0\} \rightarrow \mathbb{C}\setminus \{0\}$ by 
\[
w=\Phi_3(z)=
R_{\rho_0}\sqrt{\frac{1+\rho_0}{1-\rho_0}}\frac{1}{z}
=\left(\frac{z}{R_{\rho_0}}\sqrt{\frac{1-\rho_0}{1+\rho_0}}\right)^{-1}.
\]
Since $1/\Pi(z)$ is a conformal representation of $\mathbb{S}^2\setminus \{(0,0,1), (0, 0, -1)\}$ taking the north pole to the origin (in the limiting sense), the composition
\[
\Phi_2=\Pi^{-1}\circ \Phi_3
\]
is a conformal parameterisation sending the circle $R=R_{\rho_0}$ in the $z$-plane
to the circle $\gamma(\rho_0)$ in $\mathbb{S}^2$. This means that $\Phi_1$ and $\Phi_2$ are normalised by the condition
\[
\Phi_1(\{|z|=R_{\rho_0})=\Phi_2(\{|z|=R_{\rho_0})=\gamma(\rho_0).
\]
By construction, the conformal maps $\Phi_1$ and $\Phi_2$ are equivariant with respect to the following actions of $\mathbb{S}^1$:
the action of $\mathbb{S}^1$ on the domain $\mathbb{C}$ of $\Phi_1$ and $\Phi_2$ , that is, the rotation around the origin, and the action of $\mathbb{S}^1$ on the range $\mathbb{R}^3$, that is, the rotation around the $Z$-axis.

The resulting map
$$
\mathbb{S}^2\setminus \{(0,0,-1), (0,0,1)\}\ni (X,Y,Z)\mapsto \Phi_1\circ \Phi_2^{-1}(X,Y,Z)\in C(\alpha, \rho_0) \setminus \{P\}
$$
is conformal and extends continuously over the north pole to a homeomorphism from $\mathbb{S}^2 \setminus \{(0,0,-1)\}$ to $C(\alpha,\rho_0)$ (see \Cref{fig:CD}). 
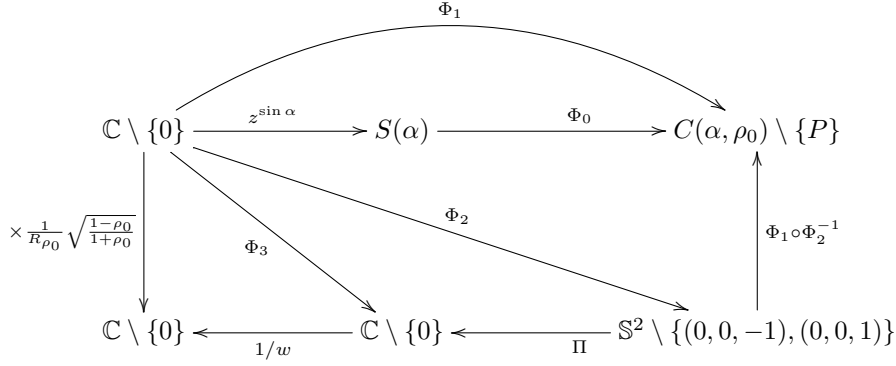
\begin{figure}[t]
\[
\xymatrix@C=60pt@R=60pt{
\mathbb{C} \setminus \{0\}
\ar[rd]_{\Phi_3}
\ar@/^40pt/[rr]^{\Phi_1}
\ar[r]^{z^{\sin\alpha}}
\ar[d]_{\times \frac{1}{R_{\rho_0}}\sqrt{\frac{1-\rho_0}{1+\rho_0}}} 
\ar[rrd]^{\Phi_2}
& S(\alpha) 
\ar[r]^{\Phi_0}
& C(\alpha,\rho_0) \setminus \{P\}\\
\mathbb{C} \setminus \{0\}
&
\mathbb{C}\setminus\{0\}  \ar[l]^{1/w}
& \mathbb{S}^2\setminus\{(0,0,-1), (0,0,1)\}
\ar[l]^{\Pi}
\ar[u]_{\Phi_1\circ \Phi_2^{-1}}
}
\]
\caption{ Conformal maps from the sphere $\mathbb{S}^2\setminus \{(0,0,-1),(0,0,1)\}$ to  $C(\alpha,\rho_0) \setminus \{P\}$. The continuous extension of the conformal map $\Phi_1\circ \Phi_2^{-1}$ takes the north pole to $P$.}
\label{fig:CD}
\end{figure}

\section{Lambert's calculation}
We now compare our calculation with Lambert's original calculation for his projections (see \cite{LamG} for Lambert's original article and its English translation, and \cite{A} for a recent exposition of this work.). 
Since the projections are equivariant with respect to the rotations, we need only to determine the heights of the images of the parallels, and hence the problem is to determine a function on a meridian describing the height of the image of each point there.
Lambert, based on his deep insight into the geometric behaviour of a conformal map (namely, the Lambert map), derived a differential equation for this, which describes the distance from the apex on the cone as a function of the angle coordinate on the sphere, see \cite[p. 195]{CP}.
Let $\epsilon$ denote the spherical distance from the north pole.  
From our conformal representation $\Phi_1$ of the cone, the distance from the cone is given by  
\begin{align*}
\mathrm{dist} &=
R_{\rho_1}^{\sin\alpha}
\left(
\frac{1+\rho_1}{1-\rho_1}\frac{1-\cos\epsilon}{1+\cos\epsilon}
\right)^{\sin \alpha/2} \\
&=
\frac{\sqrt{1-\rho_1^2}}{\sin\alpha}
\left(\frac{1+\rho_1}{1-\rho_1}\right)^{\sin \alpha/2}
\left(\tan\frac{\epsilon}{2}\right)^{\sin\alpha},
\end{align*}
a value which coincides with Lambert's calculation (up to a multiplicative constant).  

\begin{figure}
\includegraphics[bb =-1 0 475 245, width = 12cm]{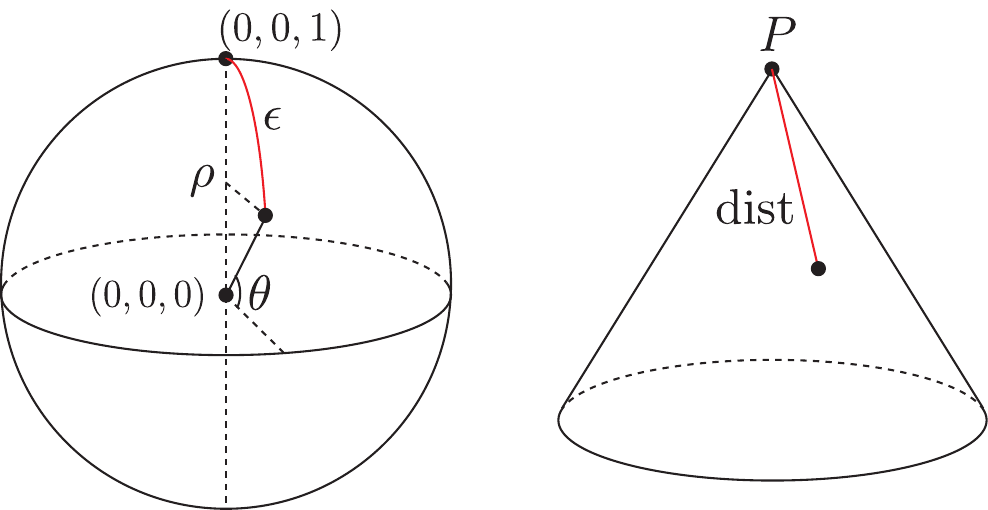}
\caption{Comparison between Lambert's calculation and ours. The $\epsilon$ indicates the spherical distance from the north pole, and ${\rm dist}$  the distance from the apex.}
\label{fig:Lambert_cal}
\end{figure}

\section{Comparison of the metrics}
The two induced metrics $\Phi_1^*ds^2_{\mathbb{R}^3}$ and $\Phi_2^*ds^2_{\mathbb{R}^3}$ defined on the $z$-plane are conformal to each other, and $z$ is the isothermal coordinate. From the rotation equivariance, the ratio of the two conformal $(0, 2)$ tensors, which is a function on $\mathbb{C}$,  
\[
\lambda =\frac{\Phi_1^*ds^2_{\mathbb{R}^3}}{\Phi_2^*ds^2_{\mathbb{R}^3}},
\]
depends only on the radial coordinate $R=|z|$ of the $z$-plane. Hence, we can  think of the ratio $\lambda$ as a function of the radial parameter $R$. 

Note that 
\begin{align*}
&\Phi_2^*ds^2_{\mathbb{R}^3}=
\left(
\dfrac{4}{\left(1+
\left|
R_{\rho_0}\sqrt{\frac{1+\rho_0}{1-\rho_0}}\frac{1}{z}
\right|^2
\right)^2}
\left|
\frac{d}{dz}
\left(
R_{\rho_0}\sqrt{\frac{1+\rho_0}{1-\rho_0}}\frac{1}{z}
\right)
\right|^2
\right)
|dz|^2 \\
&=
4R^2_{\rho_0}\left(\frac{1+\rho_0}{1-\rho_0}\right)
\left(
R_{\rho_0}^2\frac{1+\rho_0}{1-\rho_0}+|z|^2
\right)^{-2}|dz|^2.
\end{align*}
Recall  the explicit expression of  $\Phi_1^*ds^2_{\mathbb{R}^3}$ as in \eqref{eq:Euclid_metric_Cone}.

We can now calculate $\lambda$ explicitly:
\begin{align}
\lambda(R)
&=
(R^{\sin\alpha-1}\sin\alpha)^2
\frac{1}{4R^2_{\rho_0}}\frac{1+\rho_0}{1-\rho_0}
\left(
R_{\rho_0}^2\frac{1-\rho_0}{1+\rho_0}+R^2
\right)^{2}.
\label{eq:ratio_R}
\end{align}

For $R>0$, the height $\rho$ of the small circle $\Phi_2(\{|z|=R\})$ on $\mathbb{S}^2$ satisfies
\[
R=R_{\rho_0}\sqrt{\frac{1-\rho}{1+\rho}}
\sqrt{\frac{1+\rho_0}{1-\rho_0}}.
\]
Therefore, the ratio \eqref{eq:ratio_R} in terms of $\rho$ is expressed as
\begin{align*}
\Lambda(\rho)
&=\Lambda(\rho,\alpha,\rho_0)
\\
&=\lambda\left(R_{\rho_0}\sqrt{\frac{1-\rho}{1+\rho}}\sqrt{\frac{1+\rho_0}{1-\rho_0}}\right) \\
&=
\frac{\sin^2\alpha}{4}
\dfrac{1}{R_{\rho_0}^2}\dfrac{1-\rho_0}{1+\rho_0}
\left(
R_{\rho_0}^2\dfrac{1+\rho_0}{1-\rho_0}+
\left(R_{\rho_0}\sqrt{\frac{1-\rho}{1+\rho}}\sqrt{\frac{1+\rho_0}{1-\rho_0}}\right)^2
\right)^2
\\
&\qquad \times \left(R_{\rho_0}\sqrt{\frac{1-\rho}{1+\rho}}\sqrt{\frac{1+\rho_0}{1-\rho_0}}\right)^{2\sin\alpha-2}
\\
&=\frac{\sin^2\alpha}{4}
\left(R_{\rho_0}^2\dfrac{1+\rho_0}{1-\rho_0}
\right)^{\sin\alpha}\left(1+\frac{1}{1+\rho}\right)^2
\left(\frac{1-\rho}{1+\rho}\right)^{\sin\alpha-1} \\
&=\sin^2\alpha
\left(R_{\rho_0}^2\dfrac{1+\rho_0}{1-\rho_0}
\right)^{\sin\alpha}\frac{1}{(1-\rho)^{1-\sin\alpha}(1+\rho)^{1+\sin\alpha}}
\\
&=\sin^2\alpha
\frac{1-\rho_0^2}{\sin^2\alpha}
\left(\dfrac{1+\rho_0}{1-\rho_0}
\right)^{\sin\alpha}\frac{1}{(1-\rho)^{1-\sin\alpha}(1+\rho)^{1+\sin\alpha}}
\\
&=
\frac{(1-\rho_0)^{1-\sin\alpha}(1+\rho_0)^{1+\sin\alpha}}{(1-\rho)^{1-\sin\alpha}(1+\rho)^{1+\sin\alpha}}.
\end{align*}
We define $L(\rho, \alpha, \rho_0)$ to be $\sqrt{\Lambda(\rho, \alpha, \rho_0)}$.
We have thus proved the following.

\begin{proposition}
The number
 \[
 L(\rho, \alpha, \rho_0) = \sqrt{\frac{(1-\rho_0)^{1-\sin\alpha}(1+\rho_0)^{1+\sin\alpha}}{(1-\rho)^{1-\sin\alpha}(1+\rho)^{1+\sin\alpha}}}
 \]
 is the infinitesimal Lipschitz constant 
of the normalised conformal map
$$
\Phi_1\circ \Phi_2^{-1}\colon \mathbb{S}^2\setminus\{(0,0,-1), (0,0,1)\}\to C(\alpha,\rho_0 + h(\alpha, \rho_0)) \setminus \{(0,0,\rho_0+h(\alpha,\rho_0)\}.
$$
\end{proposition}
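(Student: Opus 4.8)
The plan is to recognise that the function $\lambda$ already computed is exactly the square of the quantity whose square root defines the infinitesimal Lipschitz constant, so that the Proposition reduces to the algebraic simplification of $\Lambda(\rho,\alpha,\rho_0)$ carried out above together with one conceptual identification. Recall that for a map $f$ between metric spaces, the infinitesimal (pointwise) Lipschitz constant at a point $p$ is
\[
\operatorname{Lip} f(p)=\limsup_{q\to p}\frac{d\big(f(q),f(p)\big)}{d(q,p)}.
\]
The first step is to observe that $F=\Phi_1\circ\Phi_2^{-1}$ is conformal, being a composition of the conformal maps $\Phi_2^{-1}$ and $\Phi_1$. For a smooth conformal map between Riemannian surfaces the differential $dF_p$ is, at each point, a positive scalar multiple of a linear isometry; hence its operator norm is \emph{isotropic}, that is, independent of the direction of the tangent vector to which it is applied. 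Consequently $\operatorname{Lip} F(p)$ equals this single scaling factor $\|dF_p\|$, the local linear stretch of $F$ at $p$.

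The second step reduces the computation of this stretch to the common parameter plane. Both $\Phi_1$ and $\Phi_2$ are conformal parametrisations issued from the punctured $z$-plane, and they pull back the ambient metric $ds^2_{\mathbb{R}^3}$ to the two conformal multiples $\Phi_1^*ds^2_{\mathbb{R}^3}$ and $\Phi_2^*ds^2_{\mathbb{R}^3}$ of $|dz|^2$ recorded above. For a tangent vector at $\Phi_2(z)$ with preimage $w$ in the $z$-plane, the source length is $\sqrt{\Phi_2^*ds^2_{\mathbb{R}^3}(w,w)}$ and the image length is $\sqrt{\Phi_1^*ds^2_{\mathbb{R}^3}(w,w)}$, so the stretch factor is their ratio
\[
\|dF_{\Phi_2(z)}\|=\sqrt{\frac{\Phi_1^*ds^2_{\mathbb{R}^3}}{\Phi_2^*ds^2_{\mathbb{R}^3}}}=\sqrt{\lambda(R)}.
\]
This ratio is well defined, that is, independent of $w$, exactly because both pulled-back tensors are conformal to the same flat metric $|dz|^2$; this is the point at which conformality is essential.

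The third step is purely bookkeeping: rewriting $\lambda$ as a function of the height $\rho$ via the relation $R=R_{\rho_0}\sqrt{(1-\rho)/(1+\rho)}\sqrt{(1+\rho_0)/(1-\rho_0)}$ gives $\Lambda(\rho,\alpha,\rho_0)$, and the chain of simplifications already displayed collapses it to $(1-\rho_0)^{1-\sin\alpha}(1+\rho_0)^{1+\sin\alpha}/\big((1-\rho)^{1-\sin\alpha}(1+\rho)^{1+\sin\alpha}\big)$. Taking the square root yields $L(\rho,\alpha,\rho_0)$, and by the previous two steps this is exactly $\operatorname{Lip} F$ along the parallel at height $\rho$.

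The main obstacle is the conceptual identification in the first step: the infinitesimal Lipschitz constant is defined as a $\limsup$ of metric distance quotients, whereas what we actually compute is the operator norm of the differential. For a $C^1$ map between Riemannian surfaces these agree, because in an isothermal chart the Riemannian distance agrees with the Euclidean chart-distance up to a multiplicative factor tending to $1$ as the two points coalesce, so the two limiting procedures return the same value; the isotropy furnished by conformality is precisely what guarantees that the $\limsup$ is an honest limit equal to $\|dF_p\|$, rather than merely bounded by it. Once this identification is granted, no analysis beyond the displayed algebra is needed.
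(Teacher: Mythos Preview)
Your proposal is correct and follows essentially the same approach as the paper: the paper's proof is precisely the computation of $\lambda(R)$ and its simplification to $\Lambda(\rho,\alpha,\rho_0)$ displayed immediately before the Proposition, after which it simply sets $L=\sqrt{\Lambda}$ and declares the result proved. Your write-up is in fact more complete than the paper's, since you make explicit the identification of the infinitesimal Lipschitz constant with the square root of the ratio of pulled-back metrics for a conformal map, a step the paper leaves implicit.
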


\section{Optimal apex angle $\alpha$ for the projection}
A bi-Lipschitz map $f\colon X \to Y$ between metric spaces $(X,d_X)$ and $(Y, d_Y)$ is necessarily a homeomorphism. The distortion of a bi-Lipschitz map $f$ is defined to be 
$$
\log\left(\sup_{x_1 \neq x_2} \frac{d_Y(f(x_1), f(x_2))}{d_X(x_1, x_2)}\right)-\log\left(\inf_{x_1 \neq x_2}  \frac{d_Y(f(x_1), f(x_2))}{d_X(x_1, x_2)}\right).
$$
For a spherical annulus $A(\rho_1,\rho_2)$ with the conditions  $0<\alpha<\pi/2$ and $-1<\rho_0<1$, we define $\delta(\rho_1,\rho_2,\alpha,\rho_0)$ by
\[
\delta(\rho_1,\rho_2,\alpha,\rho_0)=
\log
\left\{
\frac{\sup_{\rho_1<\rho<\rho_2}L(\rho,\alpha,\rho_0)}{\inf_{\rho_1<\rho<\rho_2}L(\rho,\alpha,\rho_0)}
\right\}.
\]
Since $\Phi_1 \circ \Phi_2^{-1}$ is conformal, $\delta(\rho_1,\rho_2,\alpha,\rho_0)$ is equal to the distortion of this map. This follows from the observation that 
any path $\sigma$ connecting $f(x_1)$ and $f(x_2)$ has length shorter than that of the inverse image $f^{-1}(\sigma)$ of the path times the constant $\sup_{\rho_1<\rho<\rho_2}L(\rho,\alpha,\rho_0)$, and that $\sigma$ has length longer than that of the inverse image $f^{-1}(\sigma)$ of the path times the constant $\inf_{\rho_1<\rho<\rho_2}L(\rho,\alpha,\rho_0)$.

In this section, our aim is to choose an optimal apex angle $\alpha=\alpha_0$ such that some conditions which we shall state now are satisfied, assuming that $\rho_0=\rho_1$.
There are two different  conditions which we can naturally impose on the shape of the cone, in order to get an optimal apex angle $\alpha_0$:
\begin{enumerate}[(a)]
\item
The distortion $\delta(\rho_0,\rho_2,\alpha_0,\rho_0)$ is minimal among all $0<\alpha<\pi/2$ and $\rho_0$ which satisfy  the conditions \eqref{eq:condition-rho0-1} and \eqref{eq:condition-rho0-2}.
\item
$L(\rho_0,\alpha_0,\rho_0)=L(\rho_2,\alpha_0,\rho_0)=1$.
\end{enumerate}
Historically, the second condition was used after choosing two parallels passing through cities regarded as important for the purpose of the map. This is the case for the Delisle--Euler and Marinus--Ptolemy maps, see the introduction of the present paper and the paper \cite{Geography-GB-OMP} for a review.

In fact these two conditions are equivalent.

\begin{theorem}
\label{optimal is isometric}
Given  $-1<\rho_1<\rho_2<1$,
the distortion $\delta$ attains a minimum at $\alpha_0$ if and only if  $L(\rho_2,\alpha_0,\rho_1)=1$.
Furthermore,  if this holds, then we have  $\rho_1<\sin(\alpha_0)<\rho_2$.
\end{theorem}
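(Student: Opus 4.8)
The plan is to reduce the whole question to a single convex function of one variable, $s=\sin\alpha$. For $s\in(0,1)$ set $g_s(\rho)=(1-\rho)^{1-s}(1+\rho)^{1+s}$, so that the Proposition just proved reads $L(\rho,\alpha,\rho_0)^2=g_s(\rho_0)/g_s(\rho)$. Since $L$ is a decreasing function of $g_s(\rho)$ and the factor $g_s(\rho_0)$ cancels in the ratio $\sup L/\inf L$, the distortion is independent of $\rho_0$ and equals
\[
\delta=\tfrac12\log\frac{\max_{\rho\in[\rho_1,\rho_2]}g_s(\rho)}{\min_{\rho\in[\rho_1,\rho_2]}g_s(\rho)}.
\]
As $\alpha\mapsto\sin\alpha$ is an increasing bijection of $(0,\pi/2)$ onto $(0,1)$, minimising $\delta$ over $\alpha$ is the same as minimising the right-hand side over $s\in(0,1)$.

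Writing $F_s=\log g_s$, a direct computation gives $\partial_\rho F_s=2(s-\rho)/(1-\rho^2)$ and $\partial_\rho^2 F_s=-2\big((\rho-s)^2+1-s^2\big)/(1-\rho^2)^2<0$ for $s\in(0,1)$, so $F_s(\cdot)$ is strictly concave with a unique maximum at $\rho=s$. In particular its minimum over $[\rho_1,\rho_2]$ is always attained at an endpoint, while its maximum is attained at $\rho=s$ when $s\in[\rho_1,\rho_2]$ and at an endpoint otherwise. The decisive point is that $F_s(\rho)=\log(1-\rho^2)+s\,q(\rho)$, with $q(\rho)=\log\frac{1+\rho}{1-\rho}$, is \emph{affine} in $s$ for each fixed $\rho$. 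Hence $M(s):=\max_\rho F_s(\rho)$, a supremum of affine functions, is convex, while $m(s):=\min_\rho F_s(\rho)=\min\big(F_s(\rho_1),F_s(\rho_2)\big)$, a minimum of two affine functions, is concave; therefore $2\delta=M-m$ is convex on $(0,1)$.

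Next I would define $s^\ast$ as the zero of the affine function $\Delta(s):=F_s(\rho_2)-F_s(\rho_1)$, whose slope $q(\rho_2)-q(\rho_1)$ is positive because $q$ is increasing. The equation $\Delta(s^\ast)=0$ is exactly $g_{s^\ast}(\rho_1)=g_{s^\ast}(\rho_2)$, that is $L(\rho_2,\alpha_0,\rho_1)=1$, so the second condition of the theorem pins down $\sin\alpha_0=s^\ast$. Evaluating at the endpoints, $F_{\rho_1}$ peaks at $\rho_1$ and $F_{\rho_2}$ peaks at $\rho_2$, whence $\Delta(\rho_1)<0<\Delta(\rho_2)$; as $\Delta$ is increasing, its unique zero satisfies $\rho_1<s^\ast<\rho_2$, which is the final assertion $\rho_1<\sin\alpha_0<\rho_2$.

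It remains to identify $s^\ast$ as the minimiser of the convex function $2\delta$. Because $s^\ast\in(\rho_1,\rho_2)$, the maximum of $F_{s^\ast}$ is interior, so near $s^\ast$ one has $M(s)=F_s(s)$, and the envelope relation $\partial_\rho F_s|_{\rho=s}=0$ gives $M'(s^\ast)=q(s^\ast)$. At $s^\ast$ the function $m$ switches from $F_s(\rho_2)$ (for $s<s^\ast$) to $F_s(\rho_1)$ (for $s>s^\ast$), so its left and right slopes there are $q(\rho_2)$ and $q(\rho_1)$. Consequently the one-sided derivatives of $2\delta$ at $s^\ast$ are $q(s^\ast)-q(\rho_2)<0$ and $q(s^\ast)-q(\rho_1)>0$, using $\rho_1<s^\ast<\rho_2$ and the monotonicity of $q$; for a convex function this sign change forces $s^\ast$ to be the unique minimiser, yielding both directions of the equivalence. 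The step requiring the most care is precisely this one: one must assemble the piecewise descriptions of $M$ and $m$, the kink of $m$ at $s^\ast$, and the envelope computation of $M'$ in order to locate the sign change. One should also note that when $s^\ast\le 0$ (equivalently $\rho_1^2\ge\rho_2^2$) there is no admissible $\alpha_0$ and no minimiser in $(0,\pi/2)$, so the equivalence then holds vacuously.
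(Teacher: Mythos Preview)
Your argument is correct and genuinely different from the paper's. The paper works with the three-variable function $F(x,y,z)$ and establishes a list of algebraic identities and monotonicity/convexity properties (Proposition~7.1(a)--(f)); it then computes $M(a)/m(a)$ explicitly on the four intervals $(-1,\rho_1)$, $[\rho_1,a_0)$, $[a_0,\rho_2)$, $[\rho_2,1)$ and shows, via property~(f), that this quotient is strictly decreasing on $(-1,a_0)$ and strictly increasing on $(a_0,1)$. Your route is more structural: having noticed that $F_s(\rho)=\log(1-\rho^2)+s\,q(\rho)$ is \emph{affine} in $s$, you get for free that $M(s)=\max_\rho F_s(\rho)$ is convex and $m(s)=\min\{F_s(\rho_1),F_s(\rho_2)\}$ is concave, hence $2\delta=M-m$ is convex; then a single one-sided derivative computation at $s^\ast$ (envelope relation $M'(s^\ast)=q(s^\ast)$, left/right slopes $q(\rho_2)$, $q(\rho_1)$ of $m$) locates the unique minimiser. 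This avoids the four-case bookkeeping and the separate Proposition entirely. The paper's approach, in exchange, yields the explicit piecewise formula for $M/m$, which can be useful for numerics, and gives strict monotonicity on each side of $a_0$ rather than mere convexity. Your closing remark on the vacuous case $s^\ast\le 0$ is a nice touch; the paper's proof, working over $a\in(-1,1)$, implicitly faces the same issue but does not comment on it.
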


%

Before proving \Cref{optimal is isometric}, we give a proposition which describes the property of the function $\Lambda=L^2$.
Fix $-1<\rho_1<\rho_2<1$.
We define the function $F$ by 
$$
F(x,y,z)=\frac{(1+z)^{1+y}(1-z)^{1-y}}{(1+x)^{1+y}(1-x)^{1-y}}
$$
for $-1<x$, $y$, $z<1$. We note that
$$
\Lambda(\rho_2,\alpha,\rho_1)=F(\rho_2,\sin(\alpha),\rho_1).
$$

\begin{proposition}
\label{prop:1}
The following properties hold:
\begin{itemize}
\item[{\rm (a)}]
$F(x,y,x)=1$;
\item[{\rm (b)}]
$F(z,y,x)=1/F(x,y,z)$;
\item[{\rm (c)}]
$\dfrac{F(x,y,z)}{F(y,y,z)}=F(x,y,y)$;
\item[{\rm (d)}]
If we fix $x>-1$ and $z<1$ with $x>z$, then the function $F(x,y,z)$ is strictly monotone decreasing with respect to  $y\in (-1,1)$.
\item[{\rm (e)}]
If we fix $y>-1$ and $z<1$, then the function $F(x,y,z)$ is strictly convex with respect to  $x \in (-1,1)$, and  $F(x,y,z)\to \infty$ as $x\to \pm 1$. Moreover, the function $F(x,y,z)$ attains its minimum at $x=y$.
\item[{\rm (f)}]
If we fix $z \in (-1,1)$, then the function $F(x,x,z)$ is strictly monotone increasing with respect to $x$ in $(-1,z)$ and is strictly monotone decreasing in $(z,1)$.
\end{itemize}
\end{proposition}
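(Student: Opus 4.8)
The plan is to reduce everything to the additive structure of the logarithm. Introducing the auxiliary function
\[
g(t,y)=(1+y)\log(1+t)+(1-y)\log(1-t),
\]
one has the identity $\log F(x,y,z)=g(z,y)-g(x,y)$, and from this single observation the three algebraic properties are immediate. Indeed, (a) is $g(x,y)-g(x,y)=0$; (b) is the sign reversal $g(x,y)-g(z,y)=-\bigl(g(z,y)-g(x,y)\bigr)$; and (c) is the telescoping cancellation $\bigl(g(z,y)-g(x,y)\bigr)-\bigl(g(z,y)-g(y,y)\bigr)=g(y,y)-g(x,y)=\log F(x,y,y)$.

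For (d), I would observe that $\log F(x,y,z)$ is affine-linear in $y$, since $\partial_y g(t,y)=\log\frac{1+t}{1-t}$. Hence its $y$-derivative is the $y$-independent quantity
\[
\frac{\partial}{\partial y}\log F=\log\frac{(1+z)(1-x)}{(1+x)(1-z)}.
\]
The hypothesis $x>z$ gives $(1+z)(1-x)-(1+x)(1-z)=2(z-x)<0$, so the argument of the logarithm is strictly less than $1$, the derivative is strictly negative, and $F$ is strictly decreasing in $y$. For (e), the key is that $g(\,\cdot\,,y)$ is strictly concave in its first argument for $y\in(-1,1)$, because
\[
\frac{\partial^2}{\partial x^2}g(x,y)=-\frac{1+y}{(1+x)^2}-\frac{1-y}{(1-x)^2}<0,
\]
so $\log F(x,y,z)=g(z,y)-g(x,y)$ is strictly convex in $x$. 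I would then upgrade strict convexity of $\log F$ to strict convexity of $F$ itself via the weighted AM--GM inequality $F(x_1)^tF(x_2)^{1-t}\le tF(x_1)+(1-t)F(x_2)$ combined with the strict log-convex inequality. The boundary behaviour $F\to\infty$ as $x\to\pm1$ follows because the denominator factors $(1+x)^{1+y}$ and $(1-x)^{1-y}$ tend to $0$ (both exponents being positive). The critical-point equation $\partial_x\log F=0$, i.e. $\frac{1+y}{1+x}=\frac{1-y}{1-x}$, then pins down the unique minimiser at $x=y$.

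For (f), I would set $h(x)=F(x,x,z)$ and differentiate its logarithm; the two terms $(1+x)\cdot\frac{1}{1+x}$ and $(1-x)\cdot\frac{1}{1-x}$ produced by the product rule cancel, leaving
\[
\frac{d}{dx}\log h(x)=\log\frac{1+z}{1-z}-\log\frac{1+x}{1-x}.
\]
Since $x\mapsto\frac{1+x}{1-x}$ is strictly increasing on $(-1,1)$, this derivative is strictly positive for $x<z$, vanishes at $x=z$, and is strictly negative for $x>z$, which gives precisely the asserted monotonicity on $(-1,z)$ and $(z,1)$.

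I do not anticipate a genuine obstacle: after the logarithmic change of variable, each statement collapses to a one-variable calculus computation. The only step meriting a little care is the passage from strict convexity of $\log F$ to strict convexity of $F$ in part (e), which the log-convexity argument above settles cleanly.
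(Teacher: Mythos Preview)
Your proof is correct and, at the level of underlying ideas, follows the same elementary calculus route as the paper: the authors also dismiss (a)--(c) as trivial, prove (d) and (f) by differentiating $\log F$ in the relevant variable and obtaining essentially the same expressions you write, and locate the minimiser in (e) from the first-derivative equation. Your packaging through the auxiliary function $g(t,y)$ is a neat way to make the additive cancellations in (a)--(c) transparent, but it is organisational rather than substantive.

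The one place where the argument differs in texture is (e). The paper computes the second derivative of $F$ itself (not $\log F$) with respect to $x$, obtains a rational expression whose numerator it rewrites as $3(x-y)^2+1-y^2$, and reads off positivity directly for $-1<y<1$. You instead observe that $g(\,\cdot\,,y)$ is strictly concave, hence $\log F$ is strictly convex in $x$, and then upgrade to strict convexity of $F$ via the weighted AM--GM inequality. Your route avoids the slightly messy second-derivative computation of $F$; the paper's route is more direct and avoids invoking the log-convexity-implies-convexity step. Both reach the same conclusion with comparable effort.
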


\begin{proof}
The statements (a), (b), (c) are trivial.
To prove (d), we observe
\begin{align*}
\frac{\partial}{\partial y}F(x,y,z)
=\left(\log\frac{1+z}{1+x}+\log\frac{1-x}{1-z}\right)F(x,y,z).
\end{align*}
If $x>z$, the derivative of $\log F(x,y,z)$ in $y$ is negative for $y\in (-1,1)$. Hence, $y\mapsto F(x,y,z)$ is strictly monotone decreasing.

We next prove (e). The second derivative of $F(x,y,z)$ in $x$ is
\begin{align*}
2\frac{(1+z)^{1+y}(1-z)^{1-z}(1+3x^2-6xy+2y^2) }{(1+x)^{y+3}(1-x)^{y-3}}\\
=2\frac{(1+z)^{1+y}(1-z)^{1-z}}{(1+x)^{y+3}(1-x)^{y-3}}(3(x-y)^2+1-y^2)
\end{align*}
which is positive if $-1<y<1$.
Therefore, $F(x,y,z)$ is strictly convex with respect to $x$ in $(-1,1)$, and we also have  $F(x,y,z)\to \infty$ as $x\to \pm 1$. It follows that with $y$ and $z$ fixed,  the  function $x\mapsto F(x,y,z)$ takes a unique minimum at some $x_0\in (-1,1)$.
Since the first derivative of $F(x,y,z)$ in $x$ is 
\[
2\frac{(1+z)^{1+y}(1-z)^{1-y}(x-y)}{(1+x)^{y+2}(1-x)^{y-2}},
\]
we see that  $x_0=y$.

We finally show (f).
We can easily see that
$$
\dfrac{\partial}{\partial x}\log F(x,x,z)=\log \frac{1-x}{1-z}+\log \frac{1+z}{1+z},
$$
hence $F(x,x,z)$ is strictly monotone increasing over $(-1,z)$ and strictly monotone decreasing over $(z,1)$.
\end{proof}
%

\begin{proof}[Proof of \Cref{optimal is isometric}]
We use $F(x,y,z)$ instead of  $\Lambda$ to make the description simpler.
We first observe that there exists a unique $a_0$ with $\rho_1<a_0<\rho_2$ such that $F(\rho_2,a_0,\rho_1)=1$. 
Indeed since
\begin{align*}
\lim_{a\to -1}F(\rho_2,a,\rho_1)
&=\left(\frac{1-\rho_1}{1-\rho_2}\right)^2>1 \\
\mathrm{and}
\\
\lim_{a\to 1}F(\rho_2,a,\rho_1)
&=\left(\frac{1+\rho_1}{1+\rho_2}\right)^2<1,
\end{align*}
by (d) of Proposition \ref{prop:1}, there exists a unique $-1<a_0<1$ such that $F(\rho_2,a_0,\rho_1)=1$. 
On the other hand,  by (a) of Proposition \ref{prop:1}, $F(\rho_1,a_0,\rho_1)=1=F(\rho_2,a_0,\rho_1)$, and by (e) of Proposition \ref{prop:1}, the map $\rho\mapsto F(\rho,a_0,\rho_1)$ is strictly convex, and the function $\rho\mapsto F(\rho,a_0,\rho_1)$ attains its minimum at $\rho=a_0$.
Therefore we have $\rho_1<a_0<\rho_2$.

We define the two functions $M$ and $m$  of $a$ by 
\begin{align*}
M(a)&=\sup_{\rho_1<\rho<\rho_2}F(\rho,a,\rho_1), \\
m(a)&=\inf_{\rho_1<\rho<\rho_2}F(\rho,a,\rho_1).
\end{align*}
Since $F$ is strictly convex with respect to the first coordinate, it attains $M(a)$  at $\rho=\rho_1$ or $\rho_2$. 
By (d) in Proposition \ref{prop:1}, we have $F(\rho_2,a,\rho_1)>1$ if $-1<a\le a_0$, and $F(\rho_2,a,\rho_1)<1$ if $a_0<a<1$. 
Therefore, 
$M(a)=F(\rho_2,a,\rho_1)$ if $-1<a\le a_0$, and $M(a)=1$ if $a_0<a<1$.
Since the minimum of $F(\cdot, a,\rho_1)$ with respect to the first coordinate is attained at $\rho=a$,  it attains $m(a)$  at $\rho=\rho_2$ if $\rho_2<a$, $\rho=a$ if $\rho_1\le a\le \rho_2$ and $\rho=\rho_1$ if $a<\rho_1$.
Therefore, we have
\begin{align*}
\frac{M(a)}{m(a)}&=
\begin{cases}
F(\rho_2,a,\rho_1) & (-1<a<\rho_1) \\
\frac{F(\rho_2,a,\rho_1)}{F(a,a,\rho_1)}=F(\rho_2,a,a) & (\rho_1\le a<a_0) \\
1/F(a,a,\rho_1)& (a_0\le a<\rho_2) \\
1/F(\rho_2,a,\rho_1) & (\rho_2\le a<1).
\end{cases}
\end{align*}
By (f) in Proposition \ref{prop:1}, $F(\rho_2,a,a)=1/F(a,a,\rho_2)$ is strictly monotone decreasing with respect to $a\in(-1,\rho_2)$, and $F(a,a,\rho_1)$ is strictly monotone decreasing if $\rho_1<a$.
It follows that  that the quotient $M(a)/m(a)$ is strictly monotone decreasing over $(-1,a_0)$, and strictly monotone increasing over $(a_0,1)$. Thus we see that the quotient attains a unique minimum at $a=a_0$.
\end{proof}

\section{Infinitesimal bi-Lipschitz constants}
Suppose that $\rho_1$ and $\rho_2$ with $-1<\rho_1<\rho_2<1$ are given.
We take $\alpha_0$ satisfying $\Lambda(\rho_2,\alpha_0,\rho_1)=1$ which we gave in the preceding section.
In this section, we  study the infinitesimal bi-Lipschitz constant $\sigma(\rho)$ defined by
$$
\sigma(\rho)=\max
\left\{
L(\rho,\alpha_0,\rho_1),\frac{1}{L(\rho,\alpha_0,\rho_1)}
\right\}
$$
for $\rho \in [\rho_1, \rho_2]$. 
Since $\Lambda(\rho_1,\alpha_0,\rho_1)=\Lambda(\rho_2,\alpha_0,\rho_1)=1$ and $\Lambda(\rho,\alpha_0,\rho_1)$ is strictly convex in terms of the first coordinate,
$\Lambda(\rho,\alpha_0,\rho_1)<1$ for $\rho_1<\rho<\rho_2$.
Therefore,
$$
\sigma(\rho)=\frac{1}{L(\rho,\alpha_0,\rho_1)}
$$
for $\rho_1<\rho<\rho_2$ (see Figure \ref{fig:BiLip}).
\begin{figure}
\includegraphics[bb = 0 2 252 143, height = 4cm]{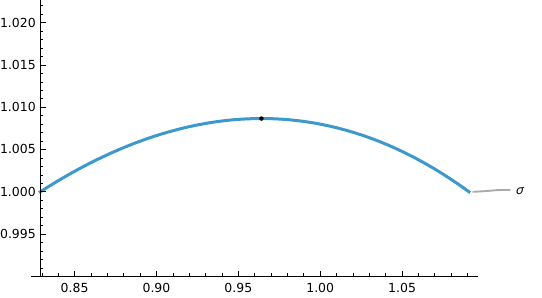}
\caption{Graph of the infinitesimal bi-Lipschitz constant $\sigma(\rho)$ on $\rho_1<\rho<\rho_2$}
\label{fig:BiLip}
\end{figure}

\section{Comparison with other maps of the Russian Empire that use conical projections}
In the paper \cite{MO}, the authors compared the distortions of several conical projections used for the construction of maps of the Russian\index{geographical map!of the Russian Empire (Delisle--Euler)} Empire.\index{General map of the Russian Empire (Delisle--Euler)} As we mentioned in the introduction of this paper, we chose this region of the world because this is the one whose map Euler famously drew during his stay in Saint Petersburg, see Figure \ref{fig:Empire}. The method Euler used to draw it is known under the Delisle--Euler projection. The projection was first used on an empirical basis by the famous French-Saint Petersburg geographer Joseph-Nicolas Delisle, and its theory, using differential calculus, was later developed by Euler, who showed that it was indeed appropriate to draw maps of regions like the Russian Empire whose span in longitude is very large.\footnote{The Russian Empire, spanning eleven time zones, included, at Euler's time, most of northern Eurasia.}
Among the maps considered in  \cite{MO}, the Delisle--Euler projection was shown to be the best one from several precise points of view.
The Lambert conformal conical projections was not included in the paper \cite{MO}.
In this section, we compare the distortion of the Lambert conical projection with that of the other conical projections used in \cite{MO}. We shall show that besides being conformal, the Lambert map is better than the other projections from various points of view.

We use the notation established in the previous sections. In the following, we consider the case when $\rho_0=\rho_1=\sin(47.30')=0.737277$ and $\rho_2=\sin(67.30')=0.92388$, corresponding to the southernmost point and the northernmost point of the Russian Empire.

\begin{figure}
\includegraphics[bb =0 2 259 160, height = 5cm]{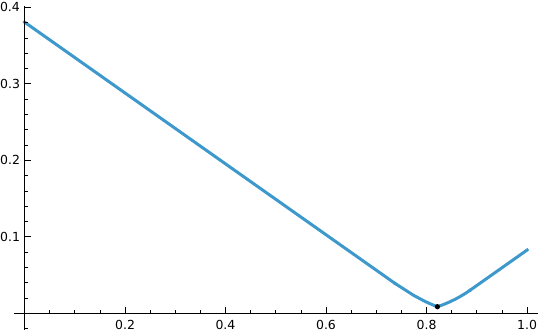}
\caption{The graph of the function $\delta(\rho_1,\rho_2,\arcsin(a),\rho_0)$ on $0<a<1$. The minimum attains at $a=0.821529=\sin(55.24')$.}
\label{fig:delta_Euler}
\end{figure} 
The graph of the distortion $\delta$ is given in \Cref{fig:delta_Euler}.
The distortion $\delta$ attains its minimum at $\alpha_0=0.9640=55\degree14'$
and the minimum value is $0.0086263354$.
By numerical computation, we see that the cone $C(\alpha_1,\rho_0+ h(\alpha_1, \rho_0))$ intersects the sphere $\mathbb{S}^2$ at  the lower circle of $\partial A(\rho_1,\rho_2)$ and the circle of the height $\rho=0.890819$, which lies inside $A(\rho_1,\rho_2)$ (see \Cref{fig:graph_delta}).
\begin{figure}
\includegraphics[bb = 0 2 244 137, height = 5cm]{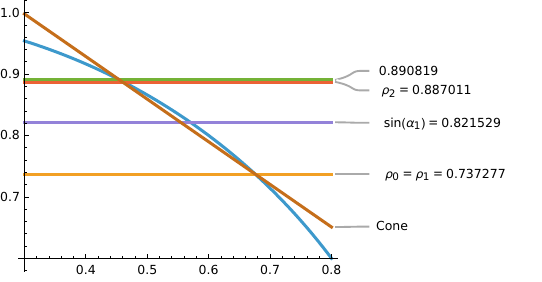}
\caption{Slices of the cone and the sphere along the $xz$-plane: The sphere and the cone interesect the circles of heights $\rho_1=0.737277$ and $\rho=0.890819>\rho_2=0.887011$.}
\label{fig:graph_delta}
\end{figure}

We now draw a graph comparing this Lambert projection with other conical projections which we studied in \cite{MO}.
We recall that there are five maps there: the projection from the centre, the horizontal projection, the Delisle--Euler projection, the Teichm\"{u}ller map, and the orthogonal projection.
Among these, it turned out that the horizontal projection has much worse distortion than the others, and we eliminate it from our study.

We now make some remark on the Delisle map which we chose in \cite{MO} and give another possible choice.
Delisle's method gives a family of projections of spherical annulus to an annulus on a circular cone taking the meridians into straight rays and the parallels to concentric circles around the apex in such a way the map restricted to the meridians  is a similarity map having a constant scaling factor common to all meridians.
In \cite{MO}, we chose a Delistle  projection which takes a spherical annulus bounded by two parallels $L_1$ and $L_2$ to an annulus bounded also by $L_1$ and $L_2$ on a circular cone intersecting the sphere along $L_1$ and $L_2$.
There is another natural choice: we fix a spherical annulus bounded by $L_1$ and $L_2$ and the same circular cone intersecting along $L_1$ and $L_2$, and then choose a Delisle projection which sends every meridian isometrically into a straight ray.
We take both of them into consideration.
We call the latter Delisle's equidistant conical projection.
\begin{figure}
\includegraphics[bb = 0 5 259 211, width = 12cm]{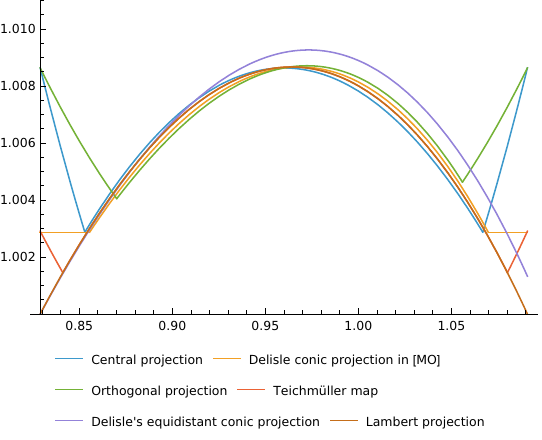}
\caption{Comparison of six maps.}
\label{fig:comparison}
\end{figure}
We now draw a graph of infinitesimal bi-Lipschitz constants regarded as functions of $\rho$ for six maps mentioned above (\Cref{fig:comparison}).
The graph shows that our Lambert map (with the choice of the best apex angle) has quite good values of infinitesimal bi-Lipschitz constants.
Numerically, the distortions of the six projections, the central projection, the Delisle projection (as in [MO]), Delisle's equidistant conical projection, the orthogonal projection, the Teichm\"{u}ller map, and the Lambert projection are 0.0171839, 0.00862621,  0.00921812, 0.00866925,  0.0115244 and 0.00862633 respectively.
This suggests that the Lambert projection with the best choice of $\alpha_0$ has a comparable distortion (although a bit worse than the Delisle projection chosen in \cite{MO}, better than the others).

As for the comparison between the Teichm\"{u}ller map and the Lambert projection, as shown in Figure~\ref{fig:comparison}, the graphs of their infinitesimal bi-Lipschitz constants almost coincide. 
Based on numerical calculations, the moduli of $A(\rho_1,\rho_2)$ and $B$ are ${\rm Mod}(A(\rho_1,\rho_2)) = 0.0737271$ and ${\rm Mod}(B) = 0.0739411$, respectively, and the maximal dilatation of the Teichm\"uller map is  equal to $1.0029\ldots = {\rm Mod}(B)/{\rm Mod}(A(\rho_1,\rho_2))$.  
Thus, the Teichm\"uller map is almost conformal and is expected to be close to a conformal map, which in our case is the Lambert map.  
Moreover, according to the numerical calculations, at $\alpha_0 = \sin^{-1}(0.821529442901464)$, where the bi-Lipschitz constant for the Lambert map attains its maximum, the value agrees with that of Teichm\"{u}ller map up to  (at least) 14 decimal places.

\section{Final remarks}

\noindent {\bf 1.} Considering the facts that the Lambert projection is conformal while the others are not, and that its distortion (in several senses) is comparable to that of the Delisle--Euler map, the Lambert projection can be regarded as a good choice for drawing a map of the Russian Empire.

\medskip

\noindent {\bf 2.} In our computations, we made use of  an explicit expression for the modulus of the planar concentric annulus which allowed us to estimate the distorsion of the conformal representation. Let us emphasize the fact that except in a few cases,  there is no explicit expression for the conformal representation of a (planar or non-planar) region and computing the metric distortion of the conformal representation of a general open simply connected region of the plane (or of the sphere) is an extremely difficult problem. We note in this respect that the question of the Lipschitz distortion of the conformal representation of a topological disc was already addressed by M. A. Lavrentieff, one of the three founders of the theory of quasiconformal mappings. The question is contained in his paper \emph{On a class of continuous representations}, translated into English, with commentaries, in \cite{Lavrentieff}; see in particular, p. 434 of the English translation.

\medskip 
\noindent {\bf 3.} We end with a few words about Lambert.\index{Lambert, Johann Heinrich}
Euler and Lambert had a strong relationship. Lambert was about 20 years younger than Euler\index{Euler, Leonhard} but he died five years before him. Lambert was a precursor of hyperbolic geometry, maybe the most important precursor. As many other mathematicians before him (including Euler), he tried to show that Euclid's parallel  postulate follows from the others. To do this, he tried to obtain a contradiction by developing a geometry which assumes Euclid's postulates and axioms except the parallel postulate which he replaced by his negation, and he sought a contradiction. He did not reach any contradiction, and the results he obtained are theorems of hyperbolic geometry. See the critical edition with a French translation of his memoir on this subject, titled \emph{The theory of parallel lignes} \cite{2012-Lambert}, and the extended review of that memoir in \cite{Papa-Theret-Lambert2}. For an exposition of Lambert's work on geographical maps, see \cite{A}. For a biography and a review of the general work of Johann Heinrich Lambert, the reader is referred to \cite{Papa-Lambert}.

\printindex


\begin{thebibliography}{99}
    
    \bibitem{Abgrall} Une étude comparative des principes de la projection de la sphère : Ptolémée, al-Farghani, al-Quhi,  halshs-00926601, version 1 (09-01-2014).

             
 \bibitem{A} A. A'Campo-Neuen, Lambert's work on geographic map projections, 
              In: Mathematical geography in the eighteenth century: Euler, Lagrange and Lambert, ed. R. Caddeo and A. Papadopoulos, Springer, Cham, 2022, p. 111-130.
              
              
              
              \bibitem{atlas-Russicus} Atlas russicus mappa una generali et undeviginti specialibus vastissimum imperium russicum
cum adiacentibus regionibus secundum leges geographicas et recentissimas observationes
delineatum exhibens. Cura et opera Academiae Imperialis Scientiarum Petropolitanae, 1745.


\bibitem{Dav} P. A. d'Avezac, Coup d'\oe il historique sur la projection des cartes de
g\'eographie (suite et fin), notice lue \`a la Soci\'et\'e de g\'eographie de Paris dans
sa s\'eance publique du 19 d\'ecembre 1862, Bulletin de la soci\'et\'e de g\'eographie
5e s\'erie, V (1863), 437-485.

              
         \bibitem{CP} R. Caddeo and A. Papadopoulos (ed.), Mathematical geography in the eighteenth century: Euler, Lagrange and Lambert. Cham: Springer, 2022.
         
         
\bibitem{Geograpy4}  C. Charitos and A. Papadopoulos, On Delisle's geographical projection In:  Mathematical geography in the eighteenth century: Euler, Lagrange and Lambert, Springer International Publishing, 2022, p. 139-150.


    
\bibitem{Dhollander} P. D'Hollander Paul. L'\oe uvre cartographique du dépôt de la guerre. In: Revue Historique des Armées, No. 171, 1988.
Archives des armées. Histoire militaire. p. 43-52;



\bibitem{Euler-De-represenatione} L. Euler, De repraesentatione superficiei sphaericae super
plano, Acta Academiae Scientiarum Imperialis Petropolitanae 1777, 1778, p.
107-132, Opera Omnia, Series 1, Volume 28, 248-275, English translation by R. Caddeo,  On the Representation of the Spherical Surface on the Plane, In: R. Caddeo and A. Papadopoulos, In: Mathematical geography in the eighteenth century: Euler, Lagrange and Lambert. Cham: Springer (2022) p. 243-278.

\bibitem{Euler2} Leonhard Euler, De proiectione geographica superficiei
sphaericae, Acta Academiae Scientiarum Imperialis Petropolitanae 1777, 1778,
p. 133-142, Opera Omnia, Series 1, Volume 28, 276-287. English translation by
R. Caddeo, On the Geographical Projection of the
Surface of the Sphere, In:  Mathematical geography in the eighteenth century: Euler, Lagrange and Lambert, ed. R. Caddeo and A. Papadopoulos, Cham: Springer (2022), p. 279-294. 

 \bibitem{Euler-Delisle} L. Euler, De proiectione geographica De Lisliana in mappa
generali imperii russici usitata, Acta Academiae Scientiarum
Imperialis Petropolitanae 1777, 1778, Opera Omnia, Series 1, Volume 28,
143-153. English translation by R. Caddeo, On Delisle’s geographical projection used for a general map of the Russian
Empire, In: R. Caddeo and A. Papadopoulos, Mathematical geography in the eighteenth century: Euler, Lagrange and Lambert. Cham: Springer (2022), p. 295-306. 

   
  \bibitem{Germain} A. Germain, Traité des projections des cartes géographiques, Paris, Arthus Bertrand, \'{E}diteur, 1866.
  
  

\bibitem{Grotzsch1}
 H.~C. Gr\"otzsch,
\"Uber die Verzerrung bei schlichten nichtkonformen Abbildungen und \"uber eine
damit zusammenh\"angende Erweiterung des Picardschen Satzes.
Ber. Verh. S\"achs. Akad. Leipzig {\bf 80} (1928), 503-507.

\bibitem{Grotzsch2}
H.~C. Gr\"otzsch, On the distortion of schlicht non-conformal mappings and on a related extension of Picard's theorem, translated by Melkana Brakalova-Trevithick in:  Handbook of Teichm\"uller theory, ed. A. Papadopoulos Vol. VII, 371-374, IRMA Lect. Math. Theor. Phys., {\bf 30}, Eur. Math. Soc., Z\"urich.



              
 \bibitem{LamG} J. H. Lambert,  Beytr{\"a}ge zum Gebrauche der Mathematik und deren Anwendung Theil 3, Verl. der Buchhandlung der Realschule, Berlin, 1772.  English translation of passages relevant to geography published in: Notes and Comments on the Composition
of Terrestrial and Celestial Maps, translated by A. A'Campo-Neuen in:  Mathematical geography in the eighteenth century: Euler, Lagrange and Lambert, ed. R. Caddeo and A. Papadopoulos, Springer, Cham, 2022 p. 367-422.

\bibitem{Lavrentieff} M. A. Lavrentieff,  Sur une classe de représentations continues. Mat. Sb. 42 (1935), 407-424. English translation, On a class of continuous representations, in Handbook of Teichm\"uller theory, Vol. VII, ed. A. Papadopoulos, EMS Publishing House, 2019, p. 417-439.



 \bibitem{Lee} L. P. Lee,  The Nomenclature
and Classification of Map projections,  Lands and Survey Department, Wellington,
N. Z., In: Empire Survey Review No. 51, Vol. VII January 1944, p. 190 - 200.


 \bibitem{MO} H. Miyachi and K. Ohshika, Lipschitz and quasiconformal mappings in cartography,  In: Essays on geometry celebrating the 65th birthday of Athanase Papadopoulos, ed. M. Uludag and A. Zeytin, Springer, Cham, 2025, p. 111-130.
                
                
\bibitem{Geography-GB-OMP} H. Miyachi,  K. Ohshika, and A. Papadopoulos, Marinus--Ptolemy and Delisle--Euler conical Maps, Ga\d nita Bh\=arat\=\i \ (Indian mathematics). Bulletin of the Indian Society for
              History of Mathematics, 45 (2013), p. 227-251.
              
              
  \bibitem{Menelaus-Monthly} A. Papadopoulos,  Three Theorems of Menelaus, The American
Mathematical Monthly, 126 (2019) p. 610-619.

\bibitem{Geograp3}  A. Papadopoulos, Euler, Delisle and cartography, In: Mathematical geography in the eighteenth century: Euler, Lagrange and Lambert, Springer International Publishing, 2022, p. 113-138.



\bibitem{Geography-GB} A. Papadopoulos,  Mathematics and  map drawing in the eighteenth century, Ga\d{n}ita Bh\=arat\=\i ,  (Indian mathematics) Vol. 41, No. 1-2, 2019, p. 91-126.


\bibitem{Papa-Lambert} A. Papadopoulos, On the life and work of Johann Heinrich Lambert, In: Spherical geometry in the eighteenth century, I: Euler, Lagrange and Lambert, ed. R. Caddeo and A. Papadopoulos, Springer, Cham, 2026.


  \bibitem{2012-Lambert} A. Papadopoulos and G. Th\'eret, La th\'eorie des parall\`eles de Johann Heinrich Lambert,  critical edition with French translation and mathematical and historical commentary, Librairie Scientifique et Technique Albert Blanchard, coll. Sciences dans l'Histoire, Paris, 214 p., 2014.
  

\bibitem{Papa-Theret-Lambert2} A. Papadopoulos and G. Théret, A review of Johann Heinrich Lambert's memoir  \emph{Theorie der Parallellinien}, In: Spherical geometry in the eighteenth century, I: Euler, Lagrange and Lambert, ed. R. Caddeo and A. Papadopoulos, Springer, Cham, 2026.


    
 




\bibitem{RP2} R. Rashed and  A. Papadopoulos, Menelaus'  Spherics:  Early Translation and al-M\=ah\=an\=\i /al-Haraw\={\i}'s Version, Critical edition and mathematical commentary with English translation, De Gruyter, Series: Scientia Graeco-Arabica,  21,  2017, 890 pages.



\bibitem{Ptolemy-geo} C. Ptol\'em\'ee (Ptolemy), Trait\'e de g\'eographie, translated from the Greek into French by N. Halma, Eberhart, Paris,  1828.


 \bibitem{Stanley} A. A. Stanley, Map projections for modern charting, the military engineer,, vol. 40, No. 268,  February 1948, p. 55-58.

 \bibitem{Tissot}  N. A. Tissot, Mémoire sur la représentation des surfaces et les projections des cartes
géographiques. Gauthier-Villars, Paris, 1881.
 
 
%
%
%
%
%
%
%
%
%
%
%
%
%
%
%



%
%
%
%
%
%
%
%
%
%
%
%

%
%
%
%
%
 
    
\end{thebibliography}
\end{document}